\newcommand{\nc}{\newcommand}
\nc{\les}{\lesssim}
\nc{\nit}{\noindent}
\nc{\nn}{\nonumber}
\nc{\D}{\partial}
\nc{\diff}[2]{\frac{d #1}{d #2}}
\nc{\diffn}[3]{\frac{d^{#3} #1}{d {#2}^{#3}}}
\nc{\pdiff}[2]{\frac{\partial #1}{\partial #2}}
\nc{\pdiffn}[3]{\frac{\partial^{#3} #1}{\partial{#2}^{#3}}}
\nc{\abs}[1] {\lvert #1 \rvert}
\nc{\cAc}{{\cal A}_c}
\nc{\cE}{{\cal E}}
\nc{\cF}{{\cal F}}
\nc{\cP}{{\cal P}}
\nc{\cV}{{\cal V}}
\nc{\cQ}{{\cal Q}}
\nc{\cGin}{{\cal G}_{\rm in}}
\nc{\cGout}{{\cal G}_{\rm out}}
\nc{\cO}{{\cal O}}
\nc{\Lav}{{\cal L}_{\rm av}}
\nc{\cL}{{\cal L}}
\nc{\cB}{{\cal B}}
\nc{\cZ}{{\cal Z}}
\nc{\cR}{{\cal R}}
\nc{\cT}{{\cal T}}
\nc{\cY}{{\cal Y}}
\nc{\cX}{{\cal X}}
\nc{\cXT}{{{\cal X}(T)}}
\nc{\cBT}{{{\cal B}(T)}}
\nc{\vD}{{\vec \mathcal{D}}}
\nc{\efield}{\mathcal{E}}
\nc{\vE}{{\vec \efield}}
\nc{\vB}{{\vec \mathcal{B}}}
\nc{\vH}{{\vec \mathcal{H}}}
\nc{\ty}{{\tilde y}}
\nc{\tu}{{\tilde u}}
\nc{\tV}{{\tilde V}}
\nc{\Pc}{{\bf P_c}}
\nc{\bx}{{\bf x}}
\nc{\bX}{{\bf X}}
\nc{\bXYZ}{{\bf XYZ}}
\nc{\bY}{{\bf Y}}
\nc{\bF}{{\bf F}}
\nc{\bS}{{\bf S}}
\nc{\dV}{{\delta V}}
\nc{\dE}{{\delta E}}
\nc{\TT}{{\Theta}}
\nc{\dPsi}{{\delta\Psi}}
\nc{\order}{{\cal O}}
\nc{\Rout}{R_{\rm out}}
\nc{\eplus}{e_+}
\nc{\eminus}{e_-}
\nc{\epm}{e_\pm}
\nc{\eps}{\varepsilon}
\nc{\vnabla}{{\vec\nabla}}
\nc{\G}{\Gamma}
\nc{\w}{\omega}
\nc{\mh}{h}
\nc{\mg}{g}
\nc{\vphi}{\varphi}
\nc{\tlambda}{\tilde\lambda}
\nc{\be}{\begin{equation}}
\nc{\ee}{\end{equation}}
\nc{\ba}{\begin{eqnarray}}
\nc{\ea}{\end{eqnarray}}
\nc{\g}{\gamma}
\nc{\ol}{\overline}
\newtheorem{theorem}{Theorem}[section]
\newtheorem{lemma}[theorem]{Lemma}
\newtheorem{prop}[theorem]{Proposition}
\newtheorem{corollary}[theorem]{Corollary}
\newtheorem{rmk}[theorem]{Remark}
\nc{\pT}{\partial_T}
\nc{\pz}{\partial_z}
\nc{\pt}{\partial_t}
\nc{\la}{\langle}
\nc{\ra}{\rangle}
\nc{\infint}{\int_{-\infty}^{\infty}}
\nc{\halfwidth}{6.5cm}
\nc{\figwidth}{10cm}
\newcommand{\f}{\frac}
\nc{\nlayers}{L} \nc{\nsectors}{M}
\nc{\indicator}{\mathbf{1}}
\nc{\Rhole}{R_{\rm hole}}
\nc{\Rring}{R_{\rm ring}}
\nc{\neff}{n_{\rm eff}}
\nc{\Frem}{F_{\rm rem}}
\nc{\R}{\mathbb R}
\nc{\Z}{\mathbb Z}
\nc{\DD}{\Delta}
\nc{\cD}{\mathcal D}
\nc{\lnorm}{\left\|}
\nc{\rnorm}{\right\|}
\nc{\rnormp}{\right\|_{\ell^{p,\eps}}}
\nc{\rar}{\rightarrow}
\date{\today}
\begin{document}

\begin{abstract}

Let $H=-\Delta+V$ be a Schr\"odinger operator on $L^2(\R^n)$ with real-valued potential $V$
for $n > 4$ and let $H_0=-\Delta$.  If $V$ decays
sufficiently, the wave operators
$W_{\pm}=s-\lim_{t\to \pm\infty} e^{itH}e^{-itH_0}$ are
known to be bounded on $L^p(\R^n)$ for all $1\leq p\leq \infty$ if zero is not an eigenvalue,
and on $1<p<\frac{n}{2}$ if zero is an eigenvalue.
We show that these wave operators are also bounded on $L^1(\R^n)$ by direct examination of the integral kernel
of the leading term.
Furthermore,
if $\int_{\R^n} V(x) \phi(x) \, dx=0$
for all eigenfunctions $\phi$, then the wave operators
are $L^p$ bounded for $1\leq p<n$.  If, in addition
$\int_{\R^n} xV(x) \phi(x) \, dx=0$, then the wave operators
are bounded for $1 \leq p<\infty$.

\end{abstract}

\title[$L^p$ boundedness of wave operators]{\textit{The $L^p$ boundedness of wave operators for Schr\"odinger Operators with threshold singularities }}

\author[M.~J. Goldberg, W.~R. Green]{Michael Goldberg and William~R. Green}

\address{Department of Mathematics \\
University of Cincinnati\\
Cincinnati, OH 45221-0025}
\email{Michael.Goldberg@uc.edu}
\address{Department of Mathematics\\
Rose-Hulman Institute of Technology \\
Terre Haute, IN 47803 U.S.A.}
\email{green@rose-hulman.edu}

\subjclass[2000]{35J10, 35Q41, 35P25}
\keywords{Schr\"odinger operator, eigenvalue, wave operator, $L^p$ bound}

\thanks{The first author 
was  supported  by Simons  Foundation grant \#281057
during the preparation of this work}

\date{October 6, 2015}

\maketitle

\section{Introduction}

Let $H=-\Delta+V$ be a Schr\"odinger operator
with potential $V$ and
$H_0=-\Delta$.  If $V$ is real-valued and satisfies
$|V(x)|\les \la x\ra^{-2-}$, then it is well known that
the spectrum of $H$ is the absolutely continuous
spectrum on $[0,\infty)$ and a finite collection of
non-positive eigenvalues, \cite{RS1}.
The wave operators are defined by the strong limits on $L^2(\R^n)$
\begin{align}
	W_{\pm}=\lim_{t\to\pm \infty} e^{itH}e^{-itH_0}.
\end{align}
Such limits are known to exist and are asymptotically
complete  for a wide class of
potentials $V$.  That is, the image of $W_{\pm}$ is equal to the
absolutely continuous subspace of $L^2(\R^n)$ associated
to the Schr\"odinger operator $H$.  Furthermore, one has
the identities
\begin{align}
	W_\pm^* W_\pm=I, \qquad W_\pm W_\pm^*=P_{ac}(H),
\end{align}
with $P_{ac}(H)$ the projection onto the absolutely continous
spectral subspace associated with the Schr\"odinger operator $H$.

We say that zero energy is regular if there are no zero
energy eigenvalues or resonances.
There is a zero energy eigenvalue if there is a
solution to $H\psi =0$ with $\psi\in L^2(\R^n)$, and a
resonance if $\psi\notin L^2(\R^n)$ is in an appropriate space which
depends on the dimension.  We note
that resonances only occur in dimensions $n\leq 4$. 
There is a long history of results on the existence and
boundedness of the wave operators.  We note that Yajima
has established $L^p$ and $W^{k,p}$ boundedness of the
wave operators for the full range of $1\leq p\leq \infty$ in \cite{YajWkp1,YajWkp2,YajWkp3} in all
dimensions $n\geq 3$, provided that zero energy is regular
under varying assumptions on the potential $V$.  The sharpest
result in $n=3$ was obtained by Beceanu in \cite{Bec}.

If zero is not regular, in general the range of $p$ on which the wave operators are bounded shrinks.
Yajima (for $n$ odd), and Yajima and Finco (for $n$ even) proved in \cite{Yaj,FY} that for each
$n>4$, the wave operators are bounded on $L^p(\R^n)$ when
$\frac{n}{n-2}<p<\frac{n}{2}$, and on $\frac{3}{2}<p<3$ in
$n=3$ if zero is not regular.  In \cite{JY4} Jensen and
Yajima showed that
the wave operators are bounded if $\frac{4}{3}<p<4$
when $n=4$ when there is an eigenvalue but no resonance at
zero.  D'Ancona and Fanelli in \cite{DF} show that the wave operators are bounded on $L^p(\R)$ for $1<p<\infty$ in the
case of a zero energy resonance, which had roots in the work of Weder, \cite{Wed}.  To the best of the
authors' knowledge, there are no results in the literature
when zero is not regular and $n=2$.  
Very
recently Yajima,  in \cite{YajNew}, reduced
the lower bound on $p$ to $1<p<\f n2$ for dimensions $n>4$ when there is a
zero energy eigenvalue.
We extend this result to include the $p=1$ endpoint.

One important property of the wave operators is the
intertwining identity,
$$
	f(H)P_{ac}=W_{\pm}f(-\Delta)W_{\pm}^*,
$$
which is valid for Borel functions $f$.  This allows one
to deduce properties of the operator $f(H)$ from the
much simpler operator $f(-\Delta)$, provided one has
control on mapping properties of the wave operators
$W_\pm$ and $W_\pm^*$.
In dimensions $n\geq 5$, boundedness of the 
wave operators on for the range of $p$ proven in \cite{Yaj,FY} imply the dispersive estimates
$$
	\|e^{itH}P_{ac}(H)\|_{L^p\to L^{p\prime}}\les
	|t|^{-\f n2+\frac{n}{p}}.
$$
Here $p^\prime$ is the conjugate exponent satisfying
$\frac{1}{p}+\frac{1}{p^\prime}=1$.  
In this way, one can
use the $L^p$ boundedness of the wave operators to 
deduce dispersive estimates for the Schr\"odinger evolution.  There has been much work on dispersive estimates
for the Schr\"odinger evolution with zero energy obstructions in recent years by Erdo\smash{\u{g}}an, Schlag and the authors in various combinations, see \cite{ES2,goldE,EG,EGG,GGodd,GGeven} in which $L^1(\R^n)\to L^\infty(\R^n)$ were studied for all $n>1$.  This work 
has roots in previous work of \cite{JSS} and 
\cite{Jen,Mur} in which the dispersive estimates were studied as operators on weighted
$L^2(\R^n)$ spaces.

The range of $p$ proven in \cite{Yaj} allows
one to deduce a decay rate of size $|t|^{-\f n2+2+}$.
This paper is motivated by the recent work of
the authors, \cite{GGodd,GGeven}, in which dispersive estimates
with a decay rate of $|t|^{2-\f n2}$ were proven in the
case of an eigenvalue at zero energy, and faster decay if the
zero energy eigenspace satisfies certain cancellation conditions.
Let $P_e$ be the projection onto the zero energy eigenspace, 
and write $P_eV1=0$ if $\int_{\R^n} V(x) \phi(x) \, dx=0$
for each eigenfunction $\phi$,
and $P_eVx=0$ if $\int_{\R^n} xV(x) \phi(x) \, dx=0$.  
Considering the linear Schr\"odinger evolution as an operator from $L^1(\R^n)$
to $L^\infty(\R^n)$, time decay of size
$|t|^{1-\f n2}$ is observed if $P_eV1=0$ and if in addition
$P_eVx=0$, the decay rate improves to $|t|^{-\f n2} $.

Time decay of these orders would be consistent with $L^p$
boundedness of the wave operators over the range 
$1 \leq p \leq n$ if $P_eV1=0$.  In the case that
$P_eVx=0$ as well, the time-decay is identical to what occurs
in the free case, so it is conceivable for the range to extend to
$1 \leq p \leq \infty$.  Our main result confirms this
to be the case for all $p$ except the upper endpoints.  During the review period for this article Yajima additionally showed that the orthogonality conditions are also necessary for the extended range of $L^p$ boundedness, \cite{YajNew2}.
\begin{theorem}\label{thm:main}

	For each $n>4$, let
	$n_*=\frac{n-1}{n-2}$.  Assume that
	$|V(x)|\les \la x\ra^{-\beta}$ for 
	$\beta>n+3$, or $\beta > 16$ if $n=6$, and
	\begin{align}\label{Vfour}
		\mathcal F\big(\la \cdot \ra^{2\sigma} V \big)
		\in L^{n_*}(\R^n) \textrm{ for some } \sigma>\frac{1}{n_*}.
	\end{align}
	\begin{enumerate}[i)]
		\item \label{reg result}
		The wave operators extend to bounded operators on $L^p(\R^n)$
		for all $1 \leq p < \frac{n}{2}$.
		
		\item \label{PV1 result} If  
		$\int_{\R^n} V(x) \phi(x) \, dx=0$
		for all zero-energy
		eigenfunctions $\phi$, then the wave operators
		extend to bounded operators on $L^p(\R^n)$ for all
		$1 \leq p<n$.
		
		\item \label{Pvx result} If 
		$\int_{\R^n} V(x) \phi(x) \, dx=0$ and 
		$\int_{\R^n} xV(x) \phi(x) \, dx=0$
		for all zero-energy
		eigenfunctions $\phi$, then the wave operators
		extend to bounded operators on $L^p(\R^n)$ for all
		$1 \leq p<\infty$.		
		
	\end{enumerate}
	
\end{theorem}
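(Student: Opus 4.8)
The plan is to analyze the wave operators through the stationary representation
$$
W_+ = I - \frac{1}{2\pi i}\int_0^\infty R_0^+(\lambda^2)\,V\,\big[R^+(\lambda^2) - R_0^+(\lambda^2)\big]\,2\lambda\,d\lambda,
$$
and, as usual, split the integrand according to whether $\lambda$ is large or small; the free-type argument (the high-energy regime, and the ``regular'' part of the low-energy expansion) gives $L^p$ boundedness for the full range $1 \le p \le \infty$ by Yajima's methods together with the $L^1$ endpoint obtained in part \eqref{reg result} of this theorem, so everything reduces to the singular low-energy contribution. Near $\lambda = 0$ one inserts the symmetric resolvent identity and the Laurent expansion of $(M(\lambda))^{-1}$ about $\lambda=0$; when zero is an eigenvalue (but not a resonance), the most singular term behaves like $\lambda^{-2}$ times a finite-rank operator built from $P_e$, and it is exactly this term whose kernel obstructs $L^p$ boundedness beyond $p < n/2$. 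The key computation is therefore to extract the leading kernel $K_{\mathrm{lead}}(x,y)$ of the operator
$$
-\frac{1}{2\pi i}\int_0^\infty \chi(\lambda)\,R_0^+(\lambda^2)\,V\,G_0\,P_e\,G_0\,V\,R_0^+(\lambda^2)\,\lambda^{-1}\,d\lambda
$$
(schematically), and to see how the orthogonality hypotheses $P_eV1=0$ and $P_eVx=0$ kill successive terms in its expansion.

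First I would record the relevant expansion of the resolvent $R^+(\lambda^2)$ near zero from \cite{GGodd,GGeven}: with zero an eigenvalue but no resonance, $(M(\lambda))^{-1} = \lambda^{-2} S_1 D_0 S_1 + \lambda^{-1}(\cdots) + O(1)$ where $S_1$ is the (finite-rank) projection associated to the eigenspace and $D_0$ is, up to constants, the restriction of $G_0$ to $S_1L^2$. Substituting this into the stationary formula, the $\lambda^{-2}$ term dominates, and after carrying out the $\lambda$-integral (using the explicit oscillatory kernel of $R_0^+(\lambda^2)$ in $\R^n$ and stationary phase / the Fourier representation) one is left with an integral operator whose kernel is a convolution-type object
$$
K_{\mathrm{lead}}(x,y) = \int_{\R^n}\int_{\R^n} \Phi(x-u)\,V(u)\,\big[G_0 P_e G_0\big](u,v)\,V(v)\,\Phi(v-y)\,du\,dv,
$$
with $\Phi$ having the size $|x|^{-(n-2)}$ of the free Green's function (and mild logarithmic corrections from the $\lambda$-integration). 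The heart of the matter is then a Schur-test / Young's-inequality estimate: one must show $\sup_x \int |K_{\mathrm{lead}}(x,y)|\,dy < \infty$ (for the $L^1\to L^1$ bound), $\sup_y \int |K_{\mathrm{lead}}(x,y)|\,dx < \infty$ (for $L^\infty$), and interpolate — but the naive size of $\Phi(v-y)$ forces $\int |K_{\mathrm{lead}}(x,y)|\,dy$ to diverge unless cancellation is exploited.

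The cancellation enters exactly here. Writing $\Phi(x-u) = \Phi(x) + [\Phi(x-u)-\Phi(x)]$ and using $\int V(u)\phi(u)\,du = 0$ for every eigenfunction (equivalently $P_eV1=0$, since $G_0P_eG_0V\cdot$ pairs against eigenfunctions), the leading $\Phi(x)$-term drops out, replacing $|x|^{-(n-2)}$ by the better decay $|x|^{-(n-1)}$ in the $x$-variable (and symmetrically in $y$ on the other side). This gains one power and pushes the range to $1 \le p < n$ in part \eqref{PV1 result}. Assuming in addition $P_eVx=0$, one expands to second order, $\Phi(x-u) = \Phi(x) + u\cdot\nabla\Phi(x) + O(|u|^2)$, and now both the constant and the linear terms are annihilated, giving decay $|x|^{-n}$; combined with the decay hypothesis $|V(x)|\lesssim \la x\ra^{-\beta}$ with $\beta > n+3$ to control the remainder and the $u$-integration, this yields a kernel that is bounded on $L^p$ for all $1 \le p < \infty$, i.e.\ part \eqref{Pvx result}.

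The main obstacle I anticipate is not the algebra of the orthogonality cancellations but the careful bookkeeping of the $\lambda$-integral near the lower endpoint: the oscillatory factors $e^{\pm i\lambda|x-u|}$ from $R_0^+(\lambda^2)$ interact with the cutoff $\chi(\lambda)$ and the singular power $\lambda^{-2}$, and one must justify that the resulting kernel genuinely has the claimed pointwise decay, uniformly, rather than merely in a weighted-$L^2$ sense — this is where the hypothesis \eqref{Vfour} on $\mathcal F(\la\cdot\ra^{2\sigma}V)$, and the dimension-dependent decay requirement (with the special value $\beta > 16$ when $n=6$), must be used to absorb the contributions where stationary phase is unavailable. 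A secondary technical point is handling the ``cross terms'' at order $\lambda^{-1}$ in $(M(\lambda))^{-1}$, which do not vanish under the orthogonality conditions but are less singular and should be controllable by the same Schur-test scheme once the leading term is understood; verifying that they never require cancellation beyond what $P_eV1=0$ already supplies is the piece I would check most carefully.
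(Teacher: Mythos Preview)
Your high-level strategy matches the paper's: isolate the singular low-energy term coming from the $\lambda^{-2}P_e$ pole in the resolvent expansion, carry out the $\lambda$-integral, then exploit $P_eV1=0$ and $P_eVx=0$ via Taylor expansion to improve the kernel's spatial decay. Two substantive points of divergence deserve attention.

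First, the form of the leading singular operator. In the paper's stationary representation it is
\[
W_{s,2} = \frac{1}{\pi i}\int_0^\infty R_0^+(\lambda^2)\,V P_e V\,\big(R_0^+(\lambda^2)-R_0^-(\lambda^2)\big)\,\tilde\Phi(\lambda)\,\lambda^{-1}\,d\lambda,
\]
with the spectral density $R_0^+-R_0^-$ on the right (not a second copy of $R_0^+$) and with $VP_eV$ in the middle (not $VG_0P_eG_0V$). The distinction matters: $(R_0^+-R_0^-)(\lambda^2,|y-w|)$ is smooth in $|y-w|$ and vanishes like $\lambda^{n-2}$ at $\lambda=0$, whereas $R_0^+(\lambda^2,|y-w|)$ carries a local $|y-w|^{2-n}$ singularity that would obstruct the Taylor expansion you propose on that side.

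Second, and more consequential for part~(i): the kernel after $\lambda$-integration is \emph{not} a tensor product $\Phi(x-u)\cdots\Phi(v-y)$ with both factors of size $|\cdot|^{2-n}$. The paper shows (its Lemma on the $\lambda$-integral) that
\[
\int_0^\infty R_0^+(\lambda^2,A)\big(R_0^+-R_0^-\big)(\lambda^2,B)\,\lambda^{-1}\tilde\Phi(\lambda)\,d\lambda \ \lesssim\ \frac{1}{A^{n-2}\la A+B\ra\la A-B\ra^{n-3}},
\]
with $A=|x-z|$, $B=|y-w|$. The factor $\la A-B\ra^{3-n}$ comes from the interference of the oscillatory phases $e^{i\lambda A}$ against $e^{\pm i\lambda B}$, and it is precisely this structure that makes the resulting kernel admissible in the regions $|x|\approx|y|$ and $|x|>2|y|$ \emph{without any cancellation hypothesis}; the obstruction to large $p$ sits solely in the region $|y|>2|x|$, where the bound degenerates to $\la x\ra^{2-n}\la y\ra^{2-n}$. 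Your product-of-Green's-functions picture misses this interference and would not, as written, deliver the $p=1$ endpoint in part~(i). Correspondingly, the paper applies the Taylor-expansion cancellation only on the $(R_0^+-R_0^-)(\lambda^2,|y-w|)$ side to gain decay in $y$; the analogous expansion on the $x$-side that you describe is possible but unnecessary, as the paper explicitly remarks. The lower-order ``cross terms'' you worry about ($W_{s,1}$ in odd dimensions, $W_{\log}$ in even ones) are handled by the same scheme with one extra power of $\lambda$, and the hypothesis $\beta>16$ in $n=6$ enters only in the treatment of $W_{\log}$, not in the main $W_{s,2}$ argument.
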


Except when $n=6$, one can extend the arguments presented here to show that the endpoint case $p=\infty$
holds if one has the additional cancellation $\int_{\R^n} x^2 V(x) \phi(x) \, dx=0$ and slightly more
decay on the potential, see Remark~\ref{PVx2 rmk} below.  
The $L^p$ bounds can be extended to boundedness as an operator on $W^{k,p}(\R^n)$ for the
same range of $1<p<\infty$ with $0\leq k\leq 2$ by a standard
argument that shows an equivalence between the norms
$\|u\|_{W^{k,p}}$ and $\|(-\Delta +c^2)u\|_{L^p}$ for a
sufficiently large constant $c$, see \cite{YajNew}.

We prove the results for $W=W_-$, 
the proof for $W_+$ is similar.
Following the approach of Yajima in \cite{Yaj},
the starting point is the  stationary representation of the wave operator
\begin{equation}
\begin{aligned}\label{stat rep}
	Wu &=u-\frac{1}{\pi i} \int_0^\infty \lambda R_V^+(\lambda^2)V
	[R_0^+(\lambda^2)-R_0^-(\lambda^2)]\, u\, d\lambda \\
	&= u-\frac{1}{\pi i} \int_0^\infty \lambda \big[R_0^+(\lambda^2) -R_0^+(\lambda^2)VR_V^+(\lambda^2)\big]V
	[R_0^+(\lambda^2)-R_0^-(\lambda^2)]\, u\, d\lambda
\end{aligned}
\end{equation}
where $R_0^\pm(\lambda^2) := \lim\limits_{\eps \to 0^+} (H_0 - (\lambda \pm i\eps)^2)^{-1}$
and $R_V^+(\lambda^2) := \lim\limits_{\eps \to 0^+} (H - (\lambda +i\eps)^2)^{-1}$ 
are the free and perturbed resolvents, respectively. 
These operators are known to be well-defined on polynomially
weighted $L^2(\R^n)$ spaces due to the limiting absorption
principle, \cite{agmon}.  In dimensions $n>2$, the free
resolvent operators $R_0^\pm(\lambda^2)$ are bounded as $\lambda \to 0$, as
are the perturbed resolvents $R_V^\pm(\lambda^2)$ if zero is regular.  When zero is not regular,
the perturbed resolvent
becomes singular as $\lambda\to 0$.  This singular behavior
shrinks the range of $p$ on which
the wave operators are $L^p(\R^n)$ bounded.
 
The last equality in \eqref{stat rep} follows from the standard 
resolvent identity $R_V^+(\lambda^2)=R_0^+(\lambda^2)-R_0^+(\lambda^2)VR_V^+(\lambda^2)$.
One can then split $W$ into high and low energy parts,
$W=W\Phi^2(H_0)+W \Psi^2(H_0)$ with $\Phi, \Psi \in C_0^\infty(\R)$ smooth cut-off functions that satisfy
$\Phi^2(\lambda)+\Psi^2(\lambda)=1$ with $\Phi(\lambda^2)=1$ for $|\lambda|\leq \lambda_0/2$ and
$\Phi(\lambda^2)=0$ for $|\lambda|\geq \lambda_0$ for a
suitable constant $0<\lambda_0\ll 1$.  This allows us to 
write $W=W_<+W_>$, with $W_<$ the `low energy' portion
of the wave operator and $W_>$ the `high energy' portion.
Taking advantage of the intertwining property,
one can express $W_>=\Psi(H)W\Psi(H_0)$ and
$W_<=\Phi(H)W\Phi(H_0)$.

The weighted Fourier bound on the
potential, \eqref{Vfour}, can be interpreted as requiring a certain amount
of smoothness on the potential $V$.  In light of 
the counterexample to dispersive estimates in \cite{GV} and the work in \cite{EG1},
it seems possible that one may be able to require less 
smoothness on the
potential, we do not pursue that issue here.

In \cite{Yaj}, it was shown  that
$W_>$ is bounded in $L^p(\R^n)$ for the full range of $1\leq p\leq \infty$ provided $|V(x)|\les \la x\ra^{-n-2-}$ and
\eqref{Vfour} holds.  The high
energy portion is unaffected by zero energy eigenvalues.

The fact that $n \geq 5$ allows for greater uniformity in the treatment of $W_<$, as there are no
special considerations related to the distinction between resonances
and eigenvalues at zero.  There are, however, significant differences in the low-energy
expansion of the resolvent depending on whether $n$ is even or odd,
with the even dimensions presenting more technical challenges due to some
logarithmic behavior near zero.  The low energy analysis becomes
progressively more idiosyncratic for small $n$, requiring additional
arguments here for $n = 5, 6, 8, 10$ in particular. 

Some results are also known when $n=4$ in the case where zero is an eigenvalue
but not a resonance.  The operator $W_<$ is shown in~\cite{JY4} to be bounded
on $L^p(\R^4)$ for $\frac43 < p < 4$, and was recently extended the range to
$1 \leq p < 4$ by the authors in~\cite{GGwaveop}.
Questions about the $L^p$ boundedness of the wave operators remain open if there
is a resonance in four dimensions, or any kind of zero energy 
obstruction in two dimensions.

The next section sketches an argument that controls the leading order expression
for $W_<$ when there is a zero energy eigenvalue.  We examine the integral kernel
of this operator in order to determine the range of exponents $p$ for which it
is bounded on $L^p(\R^n)$.  The argument relies on several important integral
estimates whose proofs are provided later in Section~\ref{sec:appA}.
In Section~\ref{sec:canc} we show that the argument is direct and flexible enough to be modified to
take advantage of the additional cancellation in the event that $P_eV1 = 0$,
and more so if $P_eVx = 0$ as well.  The summary proof of Theorem~\ref{thm:main}
is given immediately afterward.  Section~\ref{sec:appA} contains a full proof of
the key integral estimates.  Finally in section~\ref{sec:8,10} we address some
additional modifications that are necessary when $n = 6, 8, 10$ in order to control
terms of $W_<$ which are not leading order but nevertheless require 
further scrutiny.

\section{No cancellation}

When there is a zero energy eigenvalue, the perturbed resolvent
$R_V^+(\lambda^2)$ in~\eqref{stat rep} has a pole of order two
whose residue is the finite-rank projection $P_e$ onto the eigenspace.
The leading term in a low energy expansion for $W_<$ is therefore
given by the operator
$$
	W_{s,2}=\frac{1}{\pi i} \int_0^\infty R_0^+(\lambda^2)
	VP_eV(R_0^+(\lambda^2)-R_0^-(\lambda^2))
	\tilde \Phi(\lambda)\lambda^{-1}\, d\lambda.
$$
In this section we obtain pointwise bounds on the integral kernel
of $W_{s,2}$ in order to determine the range of $p$ for which
it is $L^p$ bounded.  

One can show that
the remaining terms in the expansion of $W_<$ are
better behaved.  Thus the estimates on $W_{s,2}$
dictate the mapping properties of $W_<$ itself.
The exact form of the low energy expansion is
heavily dependent on whether $n$ is even or odd and is
discussed more fully in Section~\ref{sec:canc} below.
The presence or absence of threshold eigenvalues has little effect
on properties of the resolvent outside a small neighborhood of
$\lambda = 0$, so the estimates for $W_>$ are unchanged.

We first consider this operator under 
the assumption that there is a zero energy eigenvalue,
but no further cancellation.  That is, we do not assume
that $P_eV1=0$ or $P_eVx=0$.  
The kernel of $W_{s,2}$ is a sum of integrals of the form
\begin{align}\label{eqn:Ws22}
	K^{jk}(x,y) = \int_0^\infty \iint_{\R^{2n}} R_0^{+}(\lambda^2)(x,z)& V(z)\phi_{j}(z)
	V(w) \phi_{k}(w) \\  
	&(R_0^+-R_0^-)(\lambda^2)(w,y)
	\frac{\tilde\Phi(\lambda)}{\lambda} \, dwdz\, d\lambda \nn
\end{align}
where the functions $\{\phi_j\}_{j=1}^N$ form an orthonormal basis for the zero energy eigenspace, and
$\tilde \Phi(\lambda)\in C_c^\infty(\R)$ is such 
that $\tilde \Phi(\lambda)\Phi(\lambda^2)=\Phi(\lambda^2)$.  In \cite{Yaj,YajNew}, Yajima converts the integrals
to one-dimensional integrals 
and proves the desired $L^p$ bounds using the harmonic analysis tools of $A_p$ weights, maximal functions and
Hilbert transforms.  We approach the same problem by estimating
the integrals in $\R^n$ directly.   This allows us to recover
Yajima's result for $W_{s,2}$ while also obtaining
the $p=1$ endpoint.  The intermediate steps
can be modified to improve the range of $p$ if there is adequate
cancellation, which we show in Section ~\ref{sec:canc}.

For the remainder of the paper, we omit the subscripts 
on the zero-energy eigenfunctions as our calculations will be
satisfied for any such $\phi$.  Our main estimates are therefore
stated for an operator kernel $K(x,y)$ with the understanding that
each $K^{jk}(x,y)$ obeys the same bounds.  
We only utilize the
natural decay of $V(z)\phi(z)$ and (later in Section~\ref{sec:canc})
the cancellation hypotheses
in Theorem~\ref{thm:main} which hold for every $\phi$ in the
zero energy eigenspace.  
We first describe the natural decay of zero-energy
eigenfunctions.  
\begin{lemma}\label{lem:efn decay}

	If $|V(x)|\les \la x\ra^{-2-\epsilon}$ 
	for some $\epsilon>0$, and
	$\phi$ is a zero-energy eigenfunction, then
	$|\phi(x)|\les \la x\ra^{2-n}$.

\end{lemma}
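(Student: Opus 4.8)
The plan is a bootstrap argument based on the integral equation satisfied by $\phi$. Since $H\phi=0$ with $\phi\in L^2(\R^n)$, we have $-\Delta\phi=-V\phi$ in the sense of distributions, and standard elliptic iteration — repeatedly applying the Hardy--Littlewood--Sobolev inequality for the kernel $|x-y|^{2-n}\in L^{\frac{n}{n-2},\infty}$, together with the boundedness of $V$ — shows that $\phi$ is bounded. Convolving the equation with the fundamental solution of $-\Delta$ (recall $n\geq5$), the function $\phi+c_n\,|\cdot|^{2-n}*(V\phi)$ is harmonic, hence by Liouville's theorem constant; since $\phi\in L^2$ while the convolution term tends to zero at infinity, that constant vanishes and we obtain the representation
$$
	\phi(x)=-c_n\int_{\R^n}\frac{V(y)\,\phi(y)}{|x-y|^{n-2}}\,dy .
$$

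I would then iterate this identity to improve the decay. Suppose $|\phi(x)|\les\la x\ra^{-\alpha}$ for some $\alpha\geq0$; the base case $\alpha=0$ is the boundedness just established. Then $|V(y)\phi(y)|\les\la y\ra^{-(2+\epsilon+\alpha)}$, so
$$
	|\phi(x)|\les\int_{\R^n}\frac{\la y\ra^{-(2+\epsilon+\alpha)}}{|x-y|^{n-2}}\,dy .
$$
The kernel $|x-y|^{-(n-2)}$ is locally integrable (since $n-2<n$) and the integrand is integrable at spatial infinity. The standard estimate for such a convolution — of the weighted-integral type collected in Section~\ref{sec:appA}, which I would quote there or record separately — gives $|\phi(x)|\les\la x\ra^{-(\alpha+\epsilon)}$ as long as $\alpha+\epsilon<n-2$, and $|\phi(x)|\les\la x\ra^{2-n}$ once $\alpha+\epsilon\geq n-2$. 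Starting from $\alpha=0$ and gaining $\epsilon$ of decay at each step, after at most $\lceil(n-2)/\epsilon\rceil$ iterations one reaches the claimed bound $|\phi(x)|\les\la x\ra^{2-n}$. One cannot do better in general: the representation shows $\phi(x)\sim -c_n|x|^{2-n}\int_{\R^n}V(y)\phi(y)\,dy$ as $|x|\to\infty$, which is precisely the quantity that the cancellation hypothesis $P_eV1=0$ of Theorem~\ref{thm:main} is designed to annihilate.

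The computations above are routine. The two points that merit a little care are: (a) the justification of the integral representation, where one needs enough a priori control on $\phi$ — boundedness suffices — to conclude via Liouville that no additive harmonic term is present; this is provided by the elliptic bootstrap, which is standard but in large dimensions requires several iterations to reach boundedness; and (b) the borderline case $2+\epsilon+\alpha=n$ of the convolution estimate, where an extra logarithmic factor appears. The latter is harmless: one additional iteration — or, equivalently, shrinking $\epsilon$ slightly at the outset, which is allowed since the hypothesis only posits \emph{some} $\epsilon>0$ — absorbs it. Beyond this bookkeeping I anticipate no real difficulty.
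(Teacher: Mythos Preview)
Your proposal is correct and follows essentially the same bootstrap argument as the paper: establish $\phi\in L^\infty$, write the integral representation $\phi(x)=-c_n\int |x-y|^{2-n}V(y)\phi(y)\,dy$, and iterate the convolution estimate to gain $\epsilon$ of decay at each step until reaching $\la x\ra^{2-n}$. The paper simply cites an external lemma for boundedness and writes the integral identity directly from $(I+(-\Delta)^{-1}V)\phi=0$, whereas you spell out the HLS/Liouville justification and flag the logarithmic borderline, but these are cosmetic differences.
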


\begin{proof}

	We note from Lemma~5.2 of \cite{GGodd}, 
	that any eigenfunction $\phi\in L^\infty(\R^n)$.
	We then rewrite $(-\Delta+V)\phi=0$ as
	$(I+(-\Delta)^{-1}V)\phi=0$.  It is well-known that
	$(-\Delta)^{-1}$ is an integral operator with
	integral kernel $c_n |x-y|^{2-n}$.  Thus, we have
	\begin{align*}
		|\phi(x)|=\bigg| c_n \int_{\R^n}\frac{V(y)\phi(y)}{|x-y|^{n-2}}\, dy
		\bigg| &\les \| \phi \|_{\infty} \int_{\R^n}
		\frac{\la y\ra^{-2-\epsilon}}{|x-y|^{n-2}} \, dy
		\les \la x\ra^{-\epsilon}.
	\end{align*}
	The last integral bound is easily proven, see for
	example Lemma~3.8 of \cite{GV}.  This estimate allows
	us to bootstrap, increasing the decay of $\phi$ at
	each step.  
	\begin{align*}
		|\phi(x)|=\bigg| c_n \int_{\R^n}\frac{V(y)\phi(y)}{|x-y|^{n-2}}\, dy
		\bigg| &\les \int_{\R^n}
		\frac{\la y\ra^{-2-2\epsilon}}{|x-y|^{n-2}} \, dy
		\les \la x\ra^{-2\epsilon}.
	\end{align*}		
	After $\frac{n-2}{\epsilon}$ iterations,
	one has
	\begin{align*}
		|\phi(x)|=\bigg| c_n \int_{\R^n}\frac{V(y)\phi(y)}{|x-y|^{n-2}}\, dy
		\bigg| &\les \int_{\R^n}
		\frac{\la y\ra^{-n-}}{|x-y|^{n-2}} \, dy
		\les \la x\ra^{2-n}.
	\end{align*}

\end{proof}	

The free resolvents $R_0^\pm(\lambda^2)$ appear in mulitple places
within the formula for $W_{s,2}$.  They are in fact convolution operators
whose kernel (for a given $n$) depends on $\lambda$, $|x-y|$, and the choice of sign.
Our starting point for handling~\eqref{eqn:Ws22} is to integrate with respect to $\lambda$ and apply
the following bound.

\begin{lemma} \label{lem:lambdaInt}
Let $R_0^\pm(\lambda^2,A)$ denote the convolution kernel of $R_0^\pm(\lambda^2)$
evaluated at a point with $|x-y| = A$.  For each $j \geq 0$,
\begin{equation}
	\int_0^\infty R_0^+(\lambda^2, A)\partial_B^j\big(R_0^+ - R_0^-\big)(\lambda^2,B)
	\lambda^{-1}\tilde\Phi(\lambda)\, d\lambda
	\les \begin{cases} \frac{1}{A^{n-2} \la A\ra^{n-2+j}} & \text{ if } A > 2B \\
	\frac{1}{A^{n-2} \la B\ra^{n-2+j}} & \text{ if } B > 2A \\
	\frac{1}{A^{n-2} \la A\ra \la A - B\ra^{n-3+j}}
	& \text{ if } A \approx B
	\end{cases} .
\end{equation}
This can be written more succinctly as
\begin{equation}
\int_0^\infty R_0^+(\lambda^2, A)\partial_B^j\big(R_0^+ - R_0^-\big)(\lambda^2,B)
	\lambda^{-1}\tilde\Phi(\lambda)\, d\lambda
	\les \frac{1}{A^{n-2}\la A+B\ra \la A-B\ra^{n-3+j}}.
\end{equation}
\end{lemma}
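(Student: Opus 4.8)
The plan is to pass to the explicit kernels, reduce the integral in Lemma~\ref{lem:lambdaInt} to a one–dimensional oscillatory integral in $\lambda$, and control it by a combination of elementary power counting on the non–oscillatory range of $\lambda$ and repeated integration by parts on the oscillatory range. Recall that $R_0^\pm(\lambda^2)$ acts by convolution with $\frac{e^{\pm i\lambda\rho}}{\rho^{n-2}}\,\omega_\pm(\lambda\rho)$, where $\rho=|x-y|$ and $\omega_\pm$ are smooth with the symbol bounds $|\omega_\pm^{(k)}(r)|\lesssim\langle r\rangle^{\frac{n-3}{2}-k}$ and $\omega_\pm(0)\neq 0$ (for odd $n$ the $\omega_\pm$ are in fact polynomials of degree $\frac{n-3}{2}$; for even $n$ there are additional $r^{\,n-2}\log r$ terms, which are higher order since $n-2\ge 4$ and play no role away from the oscillatory regime), while $(R_0^+-R_0^-)(\lambda^2)$ is convolution with $c\,\lambda^{n-2}\Psi(\lambda\rho)$, $\Psi(r)=J_{(n-2)/2}(r)/r^{(n-2)/2}$, which is smooth with $\Psi(0)\neq 0$, satisfies $|\Psi^{(k)}(r)|\lesssim\langle r\rangle^{-\frac{n-1}{2}}$, and for $r\gtrsim 1$ factors as $e^{ir}\psi_+(r)+e^{-ir}\psi_-(r)$ with $\psi_\pm$ symbols of order $-\frac{n-1}{2}$. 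Differentiating in $B$ merely shifts the index: $\partial_B^j(R_0^+-R_0^-)(\lambda^2,B)=c\,\lambda^{n-2+j}\Psi^{(j)}(\lambda B)$, with the same bounds. Substituting, the quantity to be estimated is
\[
\frac{c}{A^{n-2}}\int_0^\infty e^{i\lambda A}\,\omega_+(\lambda A)\,\Psi^{(j)}(\lambda B)\,\lambda^{n-3+j}\,\tilde\Phi(\lambda)\,d\lambda ,
\]
so the prefactor $A^{-(n-2)}$ is already accounted for, and it remains to show that the integral is $\lesssim\langle A+B\rangle^{-1}\langle A-B\rangle^{-(n-3+j)}$.

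Next I would decompose the $\lambda$–integral dyadically, $\lambda\sim\mu$. Since $\tilde\Phi$ is compactly supported there is no large–$\mu$ tail, so the only large parameters are $A$ and $B$, and on each block the sole question is whether $\mu A$ and $\mu B$ are $\gtrsim 1$ or $\lesssim 1$. On the blocks with $\mu\max(A,B)\lesssim 1$ the factors $e^{i\lambda A}$, $\omega_+(\lambda A)$, $\Psi^{(j)}(\lambda B)$ are all $O(1)$, the block contributes $\lesssim\mu^{n-2+j}$, and summing over $\mu\lesssim\min(\lambda_0,1/\max(A,B))$ (a convergent geometric series, as $n-3+j\ge 0$) gives $\lesssim\langle\max(A,B)\rangle^{-(n-2+j)}$, which is dominated by $\langle A+B\rangle^{-1}\langle A-B\rangle^{-(n-3+j)}$ in each of the three regimes $A>2B$, $B>2A$, $A\approx B$ (using $n-2+j=1+(n-3+j)$ and $\langle A-B\rangle\le\langle A+B\rangle$). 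On the blocks with $\mu\max(A,B)\gtrsim 1$ one extracts the oscillation of whichever resolvent factor is active — replacing $\Psi^{(j)}(\lambda B)$ by $e^{\pm i\lambda B}\psi_{\pm,j}(\lambda B)$ once $\mu B\gtrsim 1$, and keeping $e^{i\lambda A}$ only when $\mu A\gtrsim 1$ — so that the block integrand is a sum of terms $e^{i\lambda\Lambda}\,(\text{symbol})\,\lambda^{n-3+j}\tilde\Phi(\lambda)$ with $\Lambda\in\{A,\,A+B,\,A-B\}$, and then integrates by parts in $\lambda$. On a block $\lambda\sim\mu$ each integration by parts multiplies the estimate by $\lesssim 1/(\mu\Lambda)$, since the phase contributes $1/\Lambda$ against an amplitude–derivative cost $\lesssim 1/\mu$ — the latter uniform precisely because the oscillation has been peeled off, leaving only genuine symbols in $\lambda A$ and $\lambda B$, the dyadic cutoff, and the power $\lambda^{n-3+j}$.

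Iterating the integration by parts on the blocks where $\mu\Lambda\gtrsim 1$ and resumming in $\mu$, the terms with phase frequency $\Lambda\in\{A,\,A+B\}$ are always non-stationary on the relevant blocks (for $\Lambda=A+B$ trivially, and the phase $\lambda A$ survives only when $\mu B\lesssim 1$, which forces $\max(A,B)\approx A$); these contribute $\lesssim\langle\max(A,B)\rangle^{-M}$ for any $M$ (together with transition contributions from the scales $\mu\sim 1/A,\ 1/B$, which a short computation bounds again by $\langle\max(A,B)\rangle^{-(n-2+j)}$), hence are absorbed. The one genuinely delicate case is the $\Lambda=A-B$ term when $A\approx B$ and $A\gtrsim 1$: on the low–$\mu$ blocks the frequency $|A-B|$ can fall below $1/\mu$, so integration by parts cannot be pushed. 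Here the key is that the relevant amplitude is the product $\omega_+(\lambda A)\psi_{-,j}(\lambda B)$ of a symbol of order $\tfrac{n-3}{2}$ with one of order $-\tfrac{n-1}{2}$ at comparable arguments $\lambda A\approx\lambda B$, hence is $\lesssim(\lambda A)^{-1}$ — a clean extra power of decay, reflecting the near cancellation of the oscillations of $R_0^+$ and of $R_0^+-R_0^-$. On blocks with $\mu|A-B|\lesssim 1$ one then bounds the integrand in absolute value: each such block contributes $\lesssim A^{-1}\mu^{n-3+j}$, and summing over $\mu\lesssim 1/|A-B|$ gives $\lesssim A^{-1}\langle A-B\rangle^{-(n-3+j)}$; on blocks with $\mu|A-B|\gtrsim 1$ the phase $\lambda(A-B)$ is non-stationary at scale $\mu$, and repeated integration by parts plus resummation over $\mu\in[1/|A-B|,\lambda_0]$ gives $\lesssim A^{-1}|A-B|^{-(n-2+j)}$, which is even better. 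Since $\langle A\rangle\approx\langle A+B\rangle\approx A$ in this regime, both contributions are $\lesssim\langle A+B\rangle^{-1}\langle A-B\rangle^{-(n-3+j)}$.

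Collecting the three regimes yields exactly the piecewise bound in the statement, and its succinct form follows by checking that $\langle A+B\rangle^{-1}\langle A-B\rangle^{-(n-3+j)}$ reduces to each of the three pieces in the corresponding regime (with $\langle A+B\rangle\approx\langle A-B\rangle\approx\langle A\rangle$ when $A>2B$, and similarly with $B$ when $B>2A$). The main obstacle, as indicated, is the bookkeeping of the integration by parts across all the dyadic blocks while tracking the switch-over scales $\mu\sim 1/A,\ 1/B$ and, above all, the near–diagonal regime $A\approx B$: there the two resolvent oscillations almost cancel in the phase $\lambda(A-B)$, and one must convert this near–cancellation (via the net amplitude gain $(\lambda A)^{-1}$ and the auxiliary split at $\mu\sim 1/|A-B|$) into the precise polynomial weight $\langle A-B\rangle^{-(n-3+j)}$ rather than the rapid decay available everywhere else.
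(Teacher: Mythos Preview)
Your proposal is correct and follows essentially the same approach as the paper. The paper organizes the calculation by splitting each resolvent kernel into a compactly supported piece $\Omega(\lambda A)$ and an oscillatory tail $e^{\pm i\lambda A}\Psi_{\frac{n-3}{2}}(\lambda A)$, then feeds the resulting phase--amplitude pairs into a single integration-by-parts lemma (their Lemma~4.1: if $|F^{(k)}(\lambda)|\lesssim\lambda^{\beta-k}$ then $\int e^{i\rho\lambda}F(\lambda)\tilde\Phi(\lambda)\,d\lambda\lesssim\langle\rho\rangle^{-\beta-1}$); your dyadic decomposition in $\mu$ with block-by-block integration by parts is the standard equivalent reformulation of that lemma, and your identification of the $\Lambda=A-B$ term with amplitude gain $(\lambda A)^{-1}$ from the product of a symbol of order $\tfrac{n-3}{2}$ with one of order $-\tfrac{n-1}{2}$ is exactly the paper's computation of $\beta=n-4+j$ in the fifth term of their decomposition when $A\approx B$.
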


To handle some lower order terms in the expansion of $W_<$, we make use a related estimate.
\begin{corollary} \label{cor:lambdaInt}
Let $R_0^\pm(\lambda^2,A)$ denote the convolution kernel of $R_0^\pm(\lambda^2)$
evaluated at a point with $|x| = A$.  For each $j \geq 0$,
\begin{equation}
	\int_0^\infty R_0^+(\lambda^2, A)\partial_B^j\big(R_0^+ - R_0^-\big)(\lambda^2,B)
	\tilde\Phi(\lambda)\, d\lambda
	\les \begin{cases} \frac{1}{A^{n-2} \la A\ra^{n-1+j}} & \text{ if } A > 2B \\
	\frac{1}{A^{n-2} \la B\ra^{n-1+j}} & \text{ if } B > 2A \\
	\frac{1}{A^{n-2} \la A\ra \la A - B\ra^{n-2+j}}
	& \text{ if } A \approx B
	\end{cases} .
\end{equation}
This can be written more succinctly as
\begin{equation}
\int_0^\infty R_0^+(\lambda^2, A)\partial_B^j\big(R_0^+ - R_0^-\big)(\lambda^2,B)
	\tilde\Phi(\lambda)\, d\lambda
	\les \frac{1}{A^{n-2}\la A+B\ra \la A-B\ra^{n-2+j}}.
\end{equation}
\end{corollary}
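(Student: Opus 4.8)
The plan is to deduce Corollary~\ref{cor:lambdaInt} from the argument used to prove Lemma~\ref{lem:lambdaInt}, since the two $\lambda$-integrals differ only by a power of the integration variable: the integrand in the Corollary is $\lambda$ times that in the Lemma. Equivalently, writing $\tilde\Phi(\lambda) = \lambda^{-1}\big(\lambda\tilde\Phi(\lambda)\big)$, the integral in the Corollary has precisely the form handled in Lemma~\ref{lem:lambdaInt} but with the smooth, compactly supported cutoff $\tilde\Phi$ replaced by $\lambda\tilde\Phi(\lambda)$, which vanishes to one additional order at $\lambda=0$. I would therefore rerun the argument of Section~\ref{sec:appA} verbatim, carrying one extra factor of $\lambda$ through every integration by parts in $\lambda$ and through every endpoint contribution at $\lambda=0$.

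To see why this produces exactly one more power of decay, recall the structure of that argument: one inserts the low-energy forms of the free resolvent kernels, namely $R_0^+(\lambda^2,A) = A^{2-n}\,\psi(\lambda A)$ with $\psi$ bounded near $0$ and carrying the oscillatory factor $e^{i\lambda A}$ for large argument, and $\partial_B^j\big(R_0^+ - R_0^-\big)(\lambda^2,B) = \lambda^{n-2+j}\,\tilde F^{(j)}(\lambda B)$ with $\tilde F(z) = c_n J_{(n-2)/2}(z)/z^{(n-2)/2}$ entire, bounded, and oscillatory-decaying. The resulting $\lambda$-integral is oscillatory with phase $\lambda(A\pm B)$, and in each of the three regimes the rate of decay is governed by the order of vanishing at $\lambda=0$ of the full amplitude. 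Removing the weight $\lambda^{-1}$ raises that order by one, upgrading the decay by a factor $\la A-B\ra^{-1}$ (respectively $\la A\ra^{-1}$ when $A>2B$ and $\la B\ra^{-1}$ when $B>2A$). A convenient way to keep track of this is the identity $\lambda\,\partial_B^j\big(R_0^+ - R_0^-\big)(\lambda^2,B) = c_n\lambda^{n-1+j}\tilde F^{(j)}(\lambda B)$, which has the same power of $\lambda$ and the same smoothness and decay profile as $\partial_B^{j+1}\big(R_0^+ - R_0^-\big)(\lambda^2,B) = c_n\lambda^{n-1+j}\tilde F^{(j+1)}(\lambda B)$: the bound asserted by Corollary~\ref{cor:lambdaInt} with index $j$ is exactly the bound Lemma~\ref{lem:lambdaInt} returns with index $j+1$, which makes the target exponent $n-2+j$ and the prefactor $A^{2-n}\la A+B\ra^{-1}$ transparent. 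Since the two profiles $\tilde F^{(j)}$ and $\tilde F^{(j+1)}$ are not literally equal one cannot quote Lemma~\ref{lem:lambdaInt} as a black box; the cleanest bookkeeping is to prove in Section~\ref{sec:appA} a single estimate with a free exponent $m$ on the weight $\lambda^m\tilde\Phi(\lambda)$, recovering Lemma~\ref{lem:lambdaInt} at $m=-1$ and Corollary~\ref{cor:lambdaInt} at $m=0$.

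The step I expect to require the most care is checking that the gain of one power is genuinely uniform, in particular across the transition between the three regimes and in the borderline regime $A\approx B$, where the phase $\lambda(A-B)$ is nearly stationary and the decay comes from the vanishing of the amplitude at $\lambda=0$ rather than from oscillation. In even dimensions there is the additional wrinkle that the expansion of $R_0^+(\lambda^2,A)$ contains a term of size $(\lambda A)^{n-2}\log(\lambda A)$; one must verify that this logarithmic contribution is also improved by the extra factor of $\lambda$ — which it is, since the stronger vanishing at $\lambda=0$ still applies — and that it does not obstruct the extra factor of $\la A-B\ra^{-1}$. Granting this, the three-case estimate follows exactly as for Lemma~\ref{lem:lambdaInt}, and collapsing the cases yields the succinct bound $A^{2-n}\la A+B\ra^{-1}\la A-B\ra^{-(n-2+j)}$.
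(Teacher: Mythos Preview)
Your proposal is correct and matches the paper's approach essentially verbatim: the paper's one-line proof is precisely your observation that multiplication by $\lambda$ has the same effect on the asymptotic expansion \eqref{eqn:asymptotics} as an additional radial derivative $\partial_B$, so Corollary~\ref{cor:lambdaInt} with index $j$ reduces to Lemma~\ref{lem:lambdaInt} with index $j+1$. Your caveat that $\tilde F^{(j)}$ and $\tilde F^{(j+1)}$ are not literally equal is handled automatically in the paper's notation, where $\Omega$ and $\Psi_{\frac{n-3}{2}}$ denote any functions with the stated support and asymptotic properties, so no separate parametrized estimate is needed.
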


\begin{rmk} 
The $j=0$ case is what appears in~\eqref{eqn:Ws22}.  
The $j=1$ and $j=2$ cases will be used to gain extra decay if $P_eV1 = 0$ and $P_eVx = 0$
respectively.
\end{rmk}

Based on Lemma~\ref{lem:lambdaInt}, we have
\begin{equation*}
	|K(x,y)|\les \iint_{\R^{2n}}\frac{|V\phi(z)| |V\phi(w)|\,dz \,dw}{|x-z|^{n-2} \la |x-z| +|y-w|\ra
	\la |x-z|- |y-w| \ra^{n-3}}.
\end{equation*}

If $V\phi(z)$ and $V\phi(w)$ decay rapidly enough, then this integral will be concentrated
primarily when $z$ and $w$ are small.

\begin{lemma}\label{lem:ring nocanc}

If $|V(z)|\les \la z\ra^{-(n-1)-}$,
we have the bound
\begin{align} \label{eqn:Kbound}
	|K(x,y)|&\les \iint_{\R^{2n}}\frac{|V\phi(z)| |V\phi(w)|\,dz\, dw}{|x-z|^{n-2} 
	\la |x-z| + |y-w|\ra
	\la |x-z| - |y-w| \ra^{n-3}}\nn \\
	&\les \frac{1}{\la x\ra^{n-2} \la |x| + |y| \ra 
	\la |x| - |y|\ra^{n-3}}.
\end{align}
\end{lemma}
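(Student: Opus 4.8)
The plan is to estimate the double integral in~\eqref{eqn:Kbound} by splitting the region of integration according to the sizes of $z$ and $w$ relative to $x$ and $y$. The key point is that $|V\phi(z)| \les \la z\ra^{-(n-1)-\epsilon}\la z\ra^{2-n} = \la z\ra^{1-2n-\epsilon}$ using Lemma~\ref{lem:efn decay}, and similarly for $w$, which is highly integrable decay. So the integrand is sharply concentrated near $z = 0$ and $w = 0$. First I would observe that in the bulk region where $|z| \leq \tfrac{1}{2}|x|$ and $|w| \leq \tfrac{1}{2}|y|$ we may replace $|x - z|$ by $\la x\ra$, $|y - w|$ by $\la y\ra$ (up to constants) in each of the three factors of the denominator — this is where one must be slightly careful with the factor $\la |x-z| - |y-w|\ra^{n-3}$, since a difference of two large quantities need not itself be large, but moving each of $z$ and $w$ by at most half its reference value changes the difference by a bounded fraction, so $\la |x-z| - |y-w|\ra \approx \la |x| - |y|\ra$ survives. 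After this replacement the $z$ and $w$ integrals decouple and each converges absolutely, leaving exactly the claimed bound $\la x\ra^{2-n}\la |x|+|y|\ra^{-1}\la |x|-|y|\ra^{3-n}$.

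The remaining step is to handle the complementary regions: $|z| > \tfrac12|x|$, or $|w| > \tfrac12|y|$. In the first of these, $\la z\ra \gtrsim \la x\ra$, so the very fast decay of $V\phi(z)$ produces an arbitrarily large power of $\la x\ra^{-1}$; this easily dominates all the denominator factors (each of which is at worst a negative power of a quantity bounded below by a constant, after using $|x-z| \gtrsim$ something or crudely bounding $|x-z|^{-(n-2)}$ against the $L^1_{\mathrm{loc}}$-type estimate), and one checks the surviving bound is no larger than the target. The case $|w| > \tfrac12|y|$ is symmetric in spirit but requires noting that $w$ only appears through $|y - w|$ in the arguments of the two $\la \cdot \ra$ factors, both of which are $\geq 1$, so that piece of the denominator is simply bounded below by $1$, and then $\int |V\phi(z)| |x-z|^{2-n}\,dz \les \la x\ra^{2-n}$ (the same convolution estimate used in the proof of Lemma~\ref{lem:efn decay}, e.g.\ Lemma~3.8 of~\cite{GV}) together with $\int_{|w| > |y|/2} |V\phi(w)|\,dw \les \la y\ra^{-\text{large}}$ closes it.

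I expect the main obstacle to be bookkeeping in the "diagonal" case $|x| \approx |y|$, where the sensitive factor is $\la |x-z| - |y-w|\ra^{n-3}$: one needs that perturbing $z$ and $w$ within the bulk region does not destroy the comparison $|x-z|-|y-w| \approx |x|-|y|$ when $|x|-|y|$ is itself large, and conversely when $|x| - |y|$ is small one simply uses $\la|x-z|-|y-w|\ra \geq 1$ and absorbs the loss into the other decay. A clean way to organize this is to prove a general lemma: for $a, b \geq 0$ and $|c|, |d| \leq \tfrac{1}{4}\max(a,b)$ one has $\la a + c\ra \approx \la a\ra$ and $\la (a+c) - (b+d)\ra \approx \la a - b\ra$, and then apply it pointwise inside the integral. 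Everything else is a routine application of the integral estimates already assumed (the convolution bound from Lemma~\ref{lem:efn decay}'s proof) and the rapid decay of $V\phi$, so once the region decomposition is set up correctly the estimate follows.
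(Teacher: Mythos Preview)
Your proposed ``comparability lemma'' --- that $\la (a+c)-(b+d)\ra \approx \la a-b\ra$ whenever $|c|,|d|\leq \tfrac14\max(a,b)$ --- is false in the direction you need.  Take $a=400$, $b=200$, $c=-100$, $d=100$: then $(a+c)-(b+d)=0$ while $a-b=200$.  In the application this means that even with $|z|\leq \tfrac12|x|$ and $|w|\leq\tfrac12|y|$ the quantity $|x-z|-|y-w|$ can vanish while $|x|-|y|$ is comparable to $|x|$, so you cannot replace $\la |x-z|-|y-w|\ra^{n-3}$ by $\la |x|-|y|\ra^{n-3}$ in the bulk.  Your fallback (``when $|x|-|y|$ is small use $\la\cdot\ra\geq 1$'') only covers $||x|-|y||\les 1$; the regime $1\ll ||x|-|y||\ll |x|$ is exactly where both arguments break.

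The complementary-region analysis also has a gap.  In the piece $|w|>\tfrac12|y|$ you discard the two $\la\cdot\ra$ factors entirely and retain only $\int |V\phi(z)|\,|x-z|^{2-n}\,dz \les \la x\ra^{2-n}$.  But when $|y|\les 1$ and $|x|$ is large the target bound is $\approx \la x\ra^{4-2n}$, so you are missing a factor of $\la x\ra^{n-2}$.  The decay $\int_{|w|>|y|/2}|V\phi(w)|\,dw\les \la y\ra^{-N}$ gives nothing here since $\la y\ra\approx 1$.  A similar shortfall appears in the $|z|>\tfrac12|x|$ piece once $n\geq 6$: the decay $|V\phi(z)|\les \la z\ra^{-(2n-1)-}$ is a \emph{fixed} power, not an arbitrarily large one, and after integrating against $|x-z|^{2-n}$ you get only $\la x\ra^{-(n-1)-}$, again short of $\la x\ra^{4-2n}$.

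The paper does not freeze the difference factor at all.  Instead it proves a one-variable lemma (Lemma~\ref{lem:A1}): for $N\geq n+\beta$,
\[
\int_{\R^n}\frac{\la z\ra^{-N}}{|x-z|^\alpha\la |x-z|+R\ra\la |x-z|-R\ra^\beta}\,dz
\ \les\ \frac{1}{\la x\ra^\alpha\la |x|+R\ra\la |x|-R\ra^\beta},
\]
established by a shell decomposition in $r=|x-z|$ (Lemma~\ref{lem:shells}) together with the one-dimensional convolution bound $\int_{\R}\la r-R\ra^{-\beta}\la r-|x|\ra^{-(N-n+1)}\,dr\les \la R-|x|\ra^{-\beta}$.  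Applying this once in $z$ with $R=|y-w|$, then once more in $w$ with $R=|x|$, gives the claim directly.  The point is that the delicate factor $\la |x-z|-R\ra^{-\beta}$ survives the integration intact; it is never replaced by a pointwise comparison.
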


We delay the proof of Lemmas~\ref{lem:lambdaInt} and~\ref{lem:ring nocanc},
and Corollary~\ref{cor:lambdaInt},
to Section~\ref{sec:appA}.  To show $L^p(\R^n)$ boundedness
of certain integral operators, we show that they have an admissible
kernel $K(x,y)$, that is
\begin{align}
	\sup_{x\in \R^n}\int_{\R^n} |K(x,y)|\, dy +
	\sup_{y\in \R^n}\int_{\R^n} |K(x,y)|\, dx <\infty.
\end{align}
It is well known that an operator with an admissible
kernel is bounded on $L^p(\R^n)$ for all $1\leq p\leq \infty$.
We now prove

\begin{prop}\label{prop:Kjkreg}

	If $|V(z)|\les \la z\ra^{-(n-1)-}$, then
	the operator with kernel $K(x,y)$ is bounded on $L^p(\R^n)$ for $ 1\leq p< \f n2$.

\end{prop}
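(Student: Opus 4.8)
The plan is to bound the two integrals $\sup_x \int |K(x,y)|\,dy$ and $\sup_y \int |K(x,y)|\,dx$ using the pointwise kernel bound from Lemma~\ref{lem:ring nocanc}, namely
$$
	|K(x,y)| \les \frac{1}{\la x\ra^{n-2}\la |x|+|y|\ra \la |x|-|y|\ra^{n-3}}.
$$
The key observation is that this bound depends on $x$ and $y$ only through $|x|$ and $|y|$, so after passing to polar coordinates both integrals reduce to one-dimensional integrals in the radial variables, with Jacobian factors $r^{n-1}$. For the $y$-integral one is led to estimate, writing $|x|=R$ and $|y|=r$,
$$
	\int_R \int_0^\infty \frac{r^{n-1}}{R^{n-2}\la R+r\ra \la R-r\ra^{n-3}}\,dr,
$$
(the $\int_R$ indicating the $d\omega$ integral over the sphere contributes a constant), and similarly for the $x$-integral with the roles of $R$ and $r$ partially swapped — but note the kernel is \emph{not} symmetric: it carries an extra $R^{-(n-2)}$ but only $\la r\ra$-type decay in $r$.

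First I would handle $\sup_x \int_{\R^n} |K(x,y)|\,dy$. Split the radial $r$-integral into the regions $r < R/2$, $r \approx R$, and $r > 2R$. On $r < R/2$ we have $\la R+r\ra \approx \la R-r\ra \approx \la R\ra$, so the integrand is $\les r^{n-1} R^{-(n-2)}\la R\ra^{-(n-2)}$ and integrating over $r\in(0,R/2)$ gives $\les R^n \cdot R^{-(n-2)}\la R\ra^{-(n-2)} = R^2 \la R\ra^{-(n-2)} \les 1$ since $n \geq 5$. On $r > 2R$ we have $\la R+r\ra \approx \la R-r\ra \approx \la r\ra$, so the integrand is $\les r^{n-1} R^{-(n-2)} \la r\ra^{-(n-2)} \approx R^{-(n-2)} r^{-1}$ for large $r$; but this is not integrable at infinity, which signals that \emph{this regime requires the cutoff $\la\cdot\ra$ to actually come from genuine polynomial decay} — in fact here $\la r\ra^{-(n-2)}$ from $\la R-r\ra^{-(n-3)}\la R+r\ra^{-1}$ contributes $r^{-(n-2)}$, giving $r^{n-1-(n-2)} = r$, still divergent. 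I expect this is exactly where the restriction $p < n/2$ enters: rather than showing $K$ is admissible (which would give all $1\le p\le\infty$), we only have $K$ bounded on $L^p$ for $p<n/2$, so the argument must instead test against $L^p$ and $L^{p'}$ functions or use Schur-type testing with power weights $|y|^{-\alpha}$. Concretely, one uses the standard fact that if $\int |K(x,y)| |y|^{-\alpha}\,dy \les |x|^{-\alpha}$ and the dual bound holds, then $K$ is bounded on $L^p$ for a range of $p$ determined by $\alpha$; choosing a weight $|y|^{\sigma}$ with $\sigma$ in an appropriate range makes the borderline $r$-integrals converge and yields precisely $1 \le p < n/2$.

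So the refined plan is: (1) record the pointwise bound from Lemma~\ref{lem:ring nocanc}; (2) invoke the Schur test with power weights — show $\int_{\R^n} |K(x,y)|\,\la y\ra^{-\gamma n}\,dy \les \la x\ra^{-\gamma n}$ and the symmetric dual statement, for $\gamma$ ranging over an interval, then optimize to extract the stated $p$-range; (3) carry out the weighted radial integrals by the three-region splitting ($r \ll R$, $r\approx R$, $r\gg R$) above, checking in each region that the power of $r$ after including the weight and Jacobian is integrable and produces the claimed power of $R$. The main obstacle is the large-$|y|$ (and dually large-$|x|$) regime: the kernel decays only like $\la x\ra^{-(n-2)} \la y\ra^{-(n-2)}\cdot|x-z|$-corrections, and against the $r^{n-1}$ Jacobian this is borderline non-integrable, which is the structural reason the range stops at $n/2$ rather than extending further — recovering the missing decay is exactly what the cancellation hypotheses $P_eV1=0$ and $P_eVx=0$ in Section~\ref{sec:canc} are designed to supply. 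A secondary bookkeeping point is keeping track of the non-symmetry between the $x$ and $y$ variables, since the two Schur integrals are genuinely different and the more restrictive one dictates the final range.
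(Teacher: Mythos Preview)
Your diagnosis is correct: the pointwise bound from Lemma~\ref{lem:ring nocanc} and the three-region split $|x|>2|y|$, $|x|\approx|y|$, $|y|>2|x|$ are exactly what the paper uses, and you correctly identify the large-$|y|$ region as the source of the restriction $p<\frac n2$.  Where you diverge from the paper is in how to handle that region.  You reach for a weighted Schur test with power weights, which could in principle be made to work, but this is more machinery than the situation requires.

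The paper's argument is more elementary.  In the two regions $|x|>2|y|$ and $|x|\approx |y|$ the kernel bound simplifies to $\la x\ra^{4-2n}$ and $\la y\ra^{1-n}\la |x|-|y|\ra^{3-n}$ respectively, and both of these are \emph{admissible} kernels (Schur with constant weight), so they are bounded on all $L^p$, $1\le p\le\infty$.  In the remaining region $|y|>2|x|$ the bound becomes $\la x\ra^{2-n}\la y\ra^{2-n}$, and here the paper simply invokes the criterion that an integral operator is bounded on $L^p$ whenever
\[
\Big\| \, \|K(x,\cdot)\|_{L^{p'}_y}\Big\|_{L^p_x}<\infty,
\]
which follows from one application of H\"older in $y$.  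For the kernel $\mathbf{1}_{|y|>2|x|}\la x\ra^{2-n}\la y\ra^{2-n}$ this mixed norm is computed directly: the inner $L^{p'}$ norm converges precisely when $(n-2)p'>n$, i.e.\ $p<\frac n2$, and then the outer $L^p$ integral converges for all $p>0$ since $n>4$.  No weights, no optimization over a parameter.  Your weighted-Schur plan would recover the same range after more work, and would likely need a separate check at $p=1$ (the endpoint where Schur degenerates to the unweighted condition $\sup_y\int|K(x,y)|\,dx<\infty$, which does hold here).
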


\begin{proof}
In the region where $|x| > 2|y|$, \eqref{eqn:Kbound} shows that $|K(x,y)| \les \la x\ra^{4-2n}$.
We show that this is an admissible kernel.  The integral with respect to $x$ is uniformly bounded,
as $\la x\ra^{4-2n}$ is integrable provided $n > 4$.  The integral with respect to $y$ is
$\la x\ra^{4-2n} \int_{|y| < |x|/2} \,dy \les \la x\ra^{4-n}$, which is uniformly bounded so long as
$n \geq 4$.

In the region where $|x| \approx |y|$, \eqref{eqn:Kbound} shows that $|K(x,y)| \les \la y\ra^{1-n}
\la |x| - |y|\ra^{3-n}$.  This is also an admissible kernel.  By changing to spherical coordinates,
we have
\begin{equation*}
\int_{|x| \approx |y|} |K(x,y)|\,dx \les \la y\ra^{1-n} |y|^{n-1} \int_{|x|/2}^{2|x|} \frac{dr}{\la |x|-r\ra^{n-3}}
\les 1
\end{equation*}
since $n > 4$.

It is only the region where $|y| > 2|x|$ that creates restrictions on the $L^p$-boundedness of the
operator $K$.  Here~\eqref{eqn:Kbound} asserts that $|K(x,y)| \les \la x\ra^{2-n}\la y\ra^{2-n}$.
This is a bounded operator on $L^p(\R^n)$ if one can take the $L^{p'}$ norm in $y$ and then the
$L^p$ norm in $x$ with a finite result.  In this case
\begin{equation}  \label{eqn:Ky bigg x}
\int_{\R^n} \frac{1}{\la x\ra^{(n-2)p}} \Big(\int_{|y|>2|x|} \frac{dy}{\la y\ra^{(n-2)p'}}\Big)^{p-1}\,dx
\les \int_{\R^n} \frac{1}{\la x\ra^{np - 4p +n}} \,dx
\end{equation}
has a convergent inner integral provided $p' > \frac{n}{n-2}$, or in other words $p < n/2$.  The
second integral converges if $p(n-4) > 0$, which is always true for $n > 4$.
\end{proof}

\section{The cases of $P_eV1=0$ and $P_eVx=0$}\label{sec:canc}

As noted above, the operator kernel $K(x,y)$ is bounded on the entire range of
$L^p(\R^n)$, $1 \leq p \leq \infty$, if one considers only the region where $2|x| >|y|$. 
The restrictions on $p$ occur
on account of integrability concerns for large $y$ alone.
Specifically, in the region where $|y| > 2|x|$, \eqref{eqn:Kbound}
provides the bound $|K(x,y)| \les \la x\ra^{2- n}\la y\ra^{2- n}$,
which clearly decays at the rate $\la y\ra^{2-n}$ and no faster.

We  show that if the nullspace of $H$ satisfies orthogonality conditions
$P_eV1=0$ and $P_eVx = 0$, then $K(x,y)$ enjoys more rapid decay 
at infinity and
hence the operator is bounded on an expanded range of $L^p$ spaces.

\begin{prop} \label{prop:PV1}
Assume $P_eV1 = 0$ and $|V(z)|\les \la z\ra^{-n-1-}$. 
The operator with kernel $K(x,y)$ is bounded on $L^p(\R^n)$ for $ 1 \leq p<n$ whenever $n>4$.
\end{prop}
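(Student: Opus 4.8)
The plan is to exploit the cancellation hypothesis $P_eV1=0$, i.e.\ $\int_{\R^n} V(w)\phi(w)\,dw = 0$, to extract additional decay from the factor $(R_0^+ - R_0^-)(\lambda^2)(w,y)$ near $w = 0$. Since the $w$-integral of $V\phi$ against a constant vanishes, we may replace the kernel $(R_0^+-R_0^-)(\lambda^2)(w,y)$ by its difference with $(R_0^+-R_0^-)(\lambda^2)(0,y)$, which by the mean value theorem is controlled by $|w|$ times $\partial_B\big(R_0^+-R_0^-\big)(\lambda^2,B)$ evaluated at some intermediate radius. This is precisely the $j=1$ case of Lemma~\ref{lem:lambdaInt}, which was recorded with exactly this application in mind. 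After integrating in $\lambda$ using the $j=1$ bound, one expects a pointwise estimate of the form
\begin{equation*}
	|K(x,y)|\les \iint_{\R^{2n}}\frac{|V\phi(z)|\,|w|\,|V\phi(w)|\,dz\,dw}{|x-z|^{n-2}\,\la |x-z|+|y-w|\ra\,\la |x-z|-|y-w|\ra^{n-2}}.
\end{equation*}
The extra $|w|$ is harmless because the hypothesis $|V(z)|\les\la z\ra^{-n-1-}$ together with the eigenfunction decay $|\phi(z)|\les\la z\ra^{2-n}$ from Lemma~\ref{lem:efn decay} leaves $|w|\,|V\phi(w)|$ integrable, and the power of $\la |x-z|-|y-w|\ra$ in the denominator has improved by one. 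Running the spatial integral estimate of Lemma~\ref{lem:ring nocanc} (with the shifted exponent) then yields a bound like $|K(x,y)|\les \la x\ra^{2-n}\la |x|+|y|\ra^{-1}\la |x|-|y|\ra^{2-n}$, i.e.\ one additional power of decay in $\la y\ra$ compared to the no-cancellation case.

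With the improved kernel bound in hand, I would re-run the three-region analysis from the proof of Proposition~\ref{prop:Kjkreg}. In the regions $|x|>2|y|$ and $|x|\approx|y|$ the kernel only decays faster, so admissibility there is unchanged and those regions contribute boundedness on all of $1\le p\le\infty$. The only region that restricts $p$ is again $|y|>2|x|$, where now $|K(x,y)|\les\la x\ra^{2-n}\la y\ra^{3-n}$. Taking the $L^{p'}$ norm in $y$ followed by the $L^p$ norm in $x$, the inner integral $\int_{|y|>2|x|}\la y\ra^{-(n-3)p'}\,dy$ converges provided $(n-3)p'>n$, i.e.\ $p' > \frac{n}{n-3}$, which rearranges to $p < n$; the resulting outer integral in $x$ converges comfortably since the total exponent is $np - (n-2)p - (n-3)p + n = \dots$, manifestly positive for $n>4$ in the relevant range. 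This gives $L^p$ boundedness for $1\le p<n$ as claimed. (One must also double-check the two easy regions really do use only $n>4$, exactly as in Proposition~\ref{prop:Kjkreg}; this is routine.)

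The main obstacle is making the ``replace by the value at $w=0$'' step rigorous while keeping uniform control of the error, i.e.\ verifying that the intermediate-point bound from the mean value theorem can genuinely be fed into Lemma~\ref{lem:lambdaInt} with $j=1$. The subtlety is that the mean value point lies on the segment between $y-w$ and $y$, so its distance to the origin is comparable to $|y|$ only when $|w|\ll|y|$; when $|w|$ is of the same order as $|y|$ one should instead just use the triangle inequality and the raw $j=0$ bound together with the already-present decay of $|V\phi(w)|$, splitting the $w$-integration accordingly. Organizing this dichotomy cleanly — and confirming that in both pieces the $w$-integral of $|w|\,|V\phi(w)|$ (or $|V\phi(w)|$ times a bounded factor) converges under the decay hypothesis $\beta > n+1$ on $V$ — is where the real work lies; everything downstream is a bookkeeping variant of Proposition~\ref{prop:Kjkreg}.
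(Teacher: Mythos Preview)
Your approach is essentially the paper's: subtract the constant $(R_0^+ - R_0^-)(\lambda^2,|y|)$ using $P_eV1=0$, apply the mean-value/Taylor step to invoke Lemma~\ref{lem:lambdaInt} with $j=1$ when $|w|<\tfrac12|y|$, and for $|w|>\tfrac12|y|$ use the raw $j=0$ bound together with the decay of $V\phi(w)$ to recover the extra $\la y\ra^{-1}$ (the paper records this as Lemma~\ref{lem:largew}); the spatial integrations are then Lemmas~\ref{lem:A1} and~\ref{lem:B1}, the latter being precisely the $s$-parametrized version of Lemma~\ref{lem:A1} needed because the intermediate point is $y-sw$.

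One arithmetic slip to fix: from your improved bound $\la x\ra^{2-n}\la |x|+|y|\ra^{-1}\la |x|-|y|\ra^{2-n}$, the region $|y|>2|x|$ gives $|K(x,y)|\les \la x\ra^{2-n}\la y\ra^{1-n}$, not $\la y\ra^{3-n}$; the correct integrability condition is $(n-1)p'>n$, which does rearrange to $p<n$, whereas your stated condition $(n-3)p'>n$ would give $p<n/3$.
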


The original estimate~\eqref{eqn:Kbound} is already sufficient
to prove this in the regions where $2|x| > |y|$, 
and also when $|x|, |y| \leq 10$.
In fact, it is even possible to extract the desired decay in $\la y\ra$ from
much of the proof of Lemma~\ref{lem:A1} when $y$ is large.
More specifically,
\begin{lemma} \label{lem:largew}
Let $k \geq 0$.
If $N \geq 2n-3+k$ and $R \geq 0$ is fixed, then
\begin{equation} \label{eqn:largew}
	\int_{|w| > \frac{|y|}{2}}\frac{\la w\ra^{-N}}{\la R+|y-w|\ra \la R-|y-w|\ra^{n-3}} \, dw
	\les \frac{1}{\la R+|y| \ra \la R-|y| \ra^{n-3}\la y\ra^k}
\end{equation}
\end{lemma}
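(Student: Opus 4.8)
The plan is to reduce \eqref{eqn:largew} to a convolution estimate in which the spherical weight $\la R+|v|\ra^{-1}\la R-|v|\ra^{-(n-3)}$ is essentially reproduced after convolving against the fast--decaying kernel $\la\,\cdot\,\ra^{-(2n-3)}$. First, on $\{|w|>\f{|y|}{2}\}$ one has $\la w\ra\gtrsim\la y\ra$, so since $N-k\ge 2n-3$ and $\la w\ra\ge 1$,
\[
	\la w\ra^{-N}\le\la w\ra^{-k}\la w\ra^{-(2n-3)}\les\la y\ra^{-k}\la w\ra^{-(2n-3)}.
\]
Discarding the constraint $|w|>\f{|y|}{2}$ (the integrand is nonnegative) and substituting $v=y-w$, it suffices to prove
\[
	\int_{\R^n}\frac{\la y-v\ra^{-(2n-3)}}{\la R+|v|\ra\,\la R-|v|\ra^{n-3}}\,dv\les\frac{1}{\la R+|y|\ra\,\la R-|y|\ra^{n-3}}.
\]
Since $2n-3>n$, the weight $\la\,\cdot\,\ra^{-(2n-3)}$ is integrable on $\R^n$, and since $\la R-|y|\ra\le\la R+|y|\ra$ the right side is bounded below by $\la R+|y|\ra^{2-n}$; I use both facts freely. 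Write $\rho=|y|$ and assume $\rho\gtrsim 1$, the case $\rho\les 1$ being immediate since every factor is then comparable to its value at the origin.

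Next I split $\R^n=E_1\cup E_2\cup E_3$ with $E_1=\{|v|\le\f\rho2\}$, $E_2=\{|v|\ge 2\rho\}$, $E_3=\{\f\rho2\le|v|\le 2\rho\}$. On $E_1$ one has $\la y-v\ra\approx\la\rho\ra$ and on $E_2$ one has $\la y-v\ra\approx\la v\ra$, so on these two pieces the $\la y-v\ra$--factor decouples from the singular kernel and one is left with elementary radial integrals; splitting further according to whether $R\les\rho$ or $R\gtrsim\rho$ — so as to locate the peak of $\la R-|v|\ra^{-(n-3)}$ at $|v|=R$ — I expect the $E_1$ contribution to be $\les\la\rho\ra^{1-n}$ when $R\les\rho$ and $\les\la\rho\ra^{3-n}\la R\ra^{2-n}$ when $R\gtrsim\rho$, with an analogous bound on $E_2$. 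Because $n\ge 5$, each of these is dominated by $\la R+\rho\ra^{2-n}$, hence by the right side of the reduced inequality.

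The region $E_3$ is the heart of the argument. There $\la R+|v|\ra\approx\la R+\rho\ra$, so it remains to show $\int_{E_3}\la y-v\ra^{-(2n-3)}\la R-|v|\ra^{-(n-3)}\,dv\les\la R-\rho\ra^{-(n-3)}$; this is trivial when $\la R-\rho\ra\approx 1$, so suppose $|R-\rho|\gg 1$. I split $E_3$ by whether $|y-v|\le\f12|R-\rho|$. On the first piece $\bigl||v|-\rho\bigr|\le|y-v|\le\f12|R-\rho|$ forces $\la R-|v|\ra\gtrsim\la R-\rho\ra$, and integrating the $L^1$ function $\la y-v\ra^{-(2n-3)}$ yields exactly the main term $\la R-\rho\ra^{-(n-3)}$. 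On the second piece I decompose dyadically in $|y-v|\approx 2^j$ over $2^j\gtrsim\la R-\rho\ra$: the intersection of such a shell with the annulus $\{|v|\approx\rho\}$ is a set on which $|v|$ ranges over an interval of length $O(2^j)$ with spherical slices of surface measure $\les 2^{j(n-1)}$, so (using $\int_\R\la s\ra^{-(n-3)}\,ds<\infty$, valid for $n\ge 5$) the shell contributes $\les 2^{-j(2n-3)}\cdot 2^{j(n-1)}=2^{-j(n-2)}$; summing the geometric series gives $\les\la R-\rho\ra^{-(n-2)}\le\la R-\rho\ra^{-(n-3)}$.

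The main obstacle is this second piece of $E_3$: once we move away from the spike of $\la y-v\ra^{-(2n-3)}$ at $v=y$ the two factors can no longer be separated, and one must control how a thin shell centered at $y$ simultaneously meets the annulus $|v|\approx\rho$ and the sphere $|v|=R$. It is here that the hypothesis $N\ge 2n-3+k$ is used sharply — it is precisely enough decay to peel off $\la y\ra^{-k}$ and still leave a kernel integrable against the spherical weight — and where the dimension restriction enters through convergence of $\int_\R\la s\ra^{-(n-3)}\,ds$. The estimates on $E_1$ and $E_2$ are routine once the case split by the size of $R$ relative to $\rho$ is made.
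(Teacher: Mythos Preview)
Your argument is correct. The idea of peeling off $\la y\ra^{-k}$ at the outset using $\la w\ra\gtrsim\la y\ra$ on $\{|w|>|y|/2\}$, then discarding the constraint, is clean: the reduced convolution inequality you are left with is exactly the $\alpha=0$, $\beta=n-3$, $N=n+\beta=2n-3$ case of the paper's Lemma~\ref{lem:A1}. The paper proves that lemma separately in Section~\ref{sec:appA}, so in context you could simply cite it and be done. Your shells $E_1,E_2,E_3$ in the variable $|v|=|y-w|$ coincide with the three regions \eqref{eqn:ring1}--\eqref{eqn:ring3} there.

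Where the two proofs genuinely diverge is on the middle annulus $E_3$. The paper keeps the constraint $|w|>|y|/2$ throughout: on $E_3$ this forces $\la w\ra\gtrsim\la y\ra$, so the shell integral collapses to $\la y\ra^{-N}\int_{|y|/2}^{2|y|} r^{n-1}\la R+r\ra^{-1}\la R-r\ra^{3-n}\,dr$, and one finishes by locating the peak at $r=R$. You, having already spent the constraint, instead run a dyadic decomposition in $|y-v|$ and use a spherical-cap estimate (the intersection of the shell $|y-v|\approx 2^j$ with the sphere $|v|=s\approx\rho$ has $(n-1)$-measure $\les 2^{j(n-1)}$, since the angular radius of the cap is $\les 2^j/\rho$). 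Both routes work; the paper's is shorter because it leverages the restriction, while yours gives an independent proof of Lemma~\ref{lem:A1} in this special case.

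Two small remarks. Your $E_1$ and $E_2$ estimates are stated as expectations rather than proved; they are routine but do require splitting further according to where $R$ sits relative to $\rho$ (and, on $E_2$ with $R\gtrsim\rho$, isolating the interval $r\approx R$), exactly as in the paper's treatment of \eqref{eqn:ring1} and \eqref{eqn:ring3}. And the case $\rho\les 1$ is not quite ``immediate'': one still has to check the integral is $\les\la R\ra^{2-n}$ uniformly in $R$, which amounts to the same $E_2$-type computation with $\rho$ replaced by $0$.
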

\begin{proof}
The estimates for~\eqref{eqn:ring1} and~\eqref{eqn:ring3} can be reproduced verbatim,
using $\alpha = 0$ and $\beta = n-3$,
the only change being that $N \geq 2n-3+k \geq n+\beta +k$ instead of $N \geq n+\beta$.  On the annulus where
$|y-w| \approx |y|$, excluding the ball $|w| < \frac{|y|}{2}$ yields the bound
\begin{align*}
\int_{\substack{|y-w| \approx |y| \\ |w| > \frac{|y|}{2}}} \frac{\la w\ra^{-N}}{\la R+|y-w|\ra\la R-|y-w|\ra^{n-3}}\,dw
&\les \la y\ra^{-N} \int_{\frac{|y|}{2}}^{2|y|} \frac{r^{n-1}}{\la R+r\ra \la R-r \ra^{n-3}}\,dr \\
&\les \frac{1}{\la y\ra^{N-n}\la R+|y|\ra} \max_{\frac{|y|}{2} < r < 2|y|} \la r-R\ra^{3-n}.
\end{align*}
If $R < \frac{|y|}{4}$  or $R > 4|y|$, then $\la r-R\ra \approx \la R-|y|\ra$  over the interval of integration,
and~\eqref{eqn:largew} is satisfied so long as $N \geq n+k$. 

When $R \approx |y|$, the maximum value of
$\la r-R\ra^{3-n}$ might be 1.  In that case $\la R - |y|\ra \les \la y \ra$, so the integral is bounded by
$\la y\ra^{n-1-N} \les \la y\ra^{2n-3-N} \la R+|y|\ra^{-1} \la R-|y|\ra^{3-n}$.
Then~\eqref{eqn:largew} is satisfied so long as $N \geq 2n-3+k$.
\end{proof}

\begin{proof}[Proof of Proposition~\ref{prop:PV1}]
By the discussion preceding Lemma~\ref{lem:largew},
we may assume that $|y| > 2|x|$ and $|y| > 10$,
and it suffices to demonstrate the bound
\begin{equation} \label{eqn:PV1}
|K(x,y)| \les \frac{1}{\la x\ra^{n-2}\la y\ra^{n-1}}
\end{equation}
in this region.

The cancellation condition $P_eV1 = 0$ allows us to rewrite the 
$K(x,y)$ integral in the following manner.
\begin{multline}	\label{eqn:KjkPV1}
K(x,y) = \iint_{\R^{2n}} \int_0^\infty V\phi(z) V\phi(w)
R_0^+(\lambda^2,|x-z|) \\
\big((R_0^+ - R_0^-)(\lambda^2, |y-w|) - (R_0^+ - R_0^-)(\lambda^2,|y|)\big)
\frac{\tilde\Phi(\lambda)}{\lambda} \,d\lambda \, dz\, dw.
\end{multline}
Subtracting the function $(R_0^+ - R_0^-)(\lambda^2,|y|)$, which is independent of $w$,
from the integrand does not affect the final value
because our standing assumption $P_eV1=0$ implies that
 $\int_{\R^n} V\phi(w)\,dw = 0$ for all eigenfunctions
 $\phi$.

For any function $F(\lambda, |y|)$ one can express
\begin{equation}\label{eqn:Taylor1}
F(\lambda, |y-w|) - F(\lambda,|y|) = \int_0^1 \partial_r F(\lambda, |y-sw|)
\frac{(-w) \cdot (y-sw)}{|y-sw|}\,ds.
\end{equation}
 Here we are interested in $F(\lambda, |y|)
= (R_0^+ - R_0^-)(\lambda^2,|y|) = (\frac{\lambda}{|y|})^{\frac{n-2}{2}}J_{\frac{n-2}{2}}(\lambda |y|)$,
whose radial derivatives are considered in the statement and proof of Lemma~\ref{lem:lambdaInt}.
Which side of the identity \eqref{eqn:Taylor1} we use is
decided based on the size of $|w|$ compared to $\f12 |y|$.
Accordingly, we divide the $w$ integal into two regions.
On the region where  $|w| < \frac12|y|$,
the contribution to $K(x,y)$ 
has the expression
\begin{multline} \label{eqn:K s}
\int_{|w| < \frac{|y|}{2}} \int_{\R^n} \int_0^\infty \int_0^1 
V\phi(z) V\phi(w)
R_0^+(\lambda^2,|x-z|) \partial_r\big((R_0^+ -R_0^-)(\lambda^2,|y-sw|)\big) \\
\frac{(-w)\cdot(y-sw)}{|y-sw|}
\frac{\tilde\Phi(\lambda)}{\lambda}\,ds\,d\lambda \, dz\, dw,
\end{multline}
where $\partial_r$ indicates the partial derivative with respect to the radial variable of
$R_0^\pm(\lambda^2,r)$.
Apply Fubini's Theorem and then Lemma~\ref{lem:lambdaInt} with $j=1$ to obtain the upper bound
\begin{equation*}
\int_0^1 \int_{|w| < \frac{|y|}{2}} \int_{\R^n} \frac{| V\phi(z)|\, |w V\phi(w)|}{|x-z|^{n-2}
	\la |x-z| + |y-sw|\ra \la|x-z| - |y-sw|\ra^{n-2}} \,dz\, dw\, ds.
\end{equation*}

By Lemma~\ref{lem:efn decay} and our assumption that $|V(z)| \les \la z\ra^{-(n+1)-}$,
we can control the decay of the numerator with $|V\phi(z)| \les \la z\ra^{-(2n-1)-}$
and $|w V\phi(w)| \les \la w\ra^{-(2n-2)-}$.  These are sufficient to apply
Lemma~\ref{lem:A1} in the $z$ variable, then Lemma~\ref{lem:B1} in the $w$ variable to obtain
\begin{align*}
|K(x,y)| \les \int_0^1 \int_{|w| < \frac{|y|}{2}}  &\frac{|V\phi(w)|}{\la x\ra^{n-2}\la |x| + |y-sw|\ra
 \la|x| - |y-sw|\ra^{n-2}} \,dw\, ds \\
&\les \int_0^1 \frac{1}{\la x\ra^{n-2}\la|x|+|y|\ra \la |x| - |y|\ra^{n-2}}\,ds 
\les \frac{1}{\la x\ra^{n-2} \la y\ra^{n-1}}
\end{align*}
when $|y| > 2|x|$, as desired.

For the portion of~\eqref{eqn:KjkPV1} where $|w| > \frac12|y|$, we treat the two
terms in the difference directly instead of 
rewriting as an integral using \eqref{eqn:Taylor1}.  For the term with $(R_0^+ - R_0^-)(\lambda^2,|y-w|)$,
direct applications of Lemmas~\ref{lem:lambdaInt} and~\ref{lem:largew} with $j=0$ and $k=1$ respectively,
followed by Lemma~\ref{lem:A1} with respect to the $z$ variable, shows that
\begin{align*}
\int_{\R^n}\int_{|w| > \frac{|y|}{2}} \int_0^\infty 
R_0^+(\lambda^2,|x-z|) V\phi(z) V\phi(w) (R_0^+ - R_0^-)(\lambda^2,|y-w|)
\frac{\tilde\Phi(\lambda)}{\lambda} \,d\lambda \, dw\, dz   \\
\les \int_{\R^n} \int_{|w|> \frac{|y|}{2}} \frac{|V\phi(z)|\, |V\phi(w)|}{
 |x-z|^{n-2}\la|x-z| + |y-w|\ra \la|x-z| -|y-w|\ra^{n-3}} \, dw\, dz \\
\les \frac{1}{\la x\ra^{n-2} \la |x| + |y| \ra \la |x| - |y|\ra^{n-3}\la y\ra} 
\les \frac{1}{\la x\ra^{n-2} \la y\ra^{n-1}} 
\end{align*}
within the region where $|y| > 2|x|$.

The estimate for the term with $(R_0^+ - R_0^-)(\lambda^2,|y|)$ is more straightforward.
\begin{align*}
\int_{\R^n}\int_{|w| > \frac{|y|}{2}} \int_0^\infty R_0^+(\lambda^2,|x-z|)V\phi(z) V\phi(w)
(R_0^+ - R_0^-)(\lambda^2,|y|)
\frac{\tilde\Phi(\lambda)}{\lambda} \,d\lambda \, dw \, dz \\
\les \int_{\R^n} \int_{|w|> \frac{|y|}{2}} \frac{|V\phi(z)|\, |V\phi(w)|}{
 |x-z|^{n-2}\la |x-z| + |y|\ra \la|x-z|-|y|\ra^{n-3}} \, dw\, dz \\
\les \frac{1}{\la x\ra^{n-2} \la |x|+ |y|\ra  \la |x| - |y|\ra^{n-3}\la y\ra} 
\les \frac{1}{\la x\ra^{n-2} \la y\ra^{n-1}}.
\end{align*}
We have used Lemma~\ref{lem:A1} for the integration in $z$, and 
the basic estimate $\int_{|w| > |y|/2} \la w\ra^{-N}\,dw \les \la y\ra^{n-N}$ in 
lieu of Lemma~\ref{lem:largew}.  We can now run
through the argument as in \eqref{eqn:Ky bigg x} to see
that the extra decay in $y$ allows the resulting integral
to converge provided $(n-1)p'>n$, which requires $p< n$.
\end{proof}

Finally, we show that with further cancellation, one can
extend to nearly the full range of $p$.  That is,

\begin{prop} \label{prop:PVx}
Assume $P_eV1 = 0$, $P_eVx=0$ and $|V(z)|\les \la z\ra^{-n-3-}$, then 
the operator with kernel $K(x,y)$ is bounded on $L^p(\R^n)$ for $ 1\leq p<\infty$ whenever $n>4$.
\end{prop}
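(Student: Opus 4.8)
The plan is to follow the proof of Proposition~\ref{prop:PV1} almost verbatim, but to carry the Taylor expansion of the free resolvent kernel one order further so as to exploit the second cancellation hypothesis $P_eVx=0$. Exactly as in the $P_eV1=0$ case, the a priori bound \eqref{eqn:Kbound} already disposes of the regions $2|x|>|y|$ and $|x|,|y|\le 10$, and (by the reasoning behind Proposition~\ref{prop:Kjkreg}) those regions yield kernels with uniformly bounded $\int_x$ and $\int_y$ integrals. So it suffices to establish, in the region $|y|>2|x|$ with $|y|>10$, the pointwise estimate
\begin{equation*}
|K(x,y)|\les \frac{1}{\la x\ra^{n-2}\la y\ra^{n}}.
\end{equation*}
Once this is in hand, the $p=1$ endpoint follows because $\int_{|x|<|y|/2}\la x\ra^{2-n}\,dx\les \la y\ra^{2}$, so $\int_{\R^n}|K(x,y)|\,dx\les \la y\ra^{2-n}$ is uniformly bounded in $y$; and for $1<p<\infty$ the H\"older/Minkowski computation of \eqref{eqn:Ky bigg x}, with the exponent $n-2$ on $\la y\ra$ replaced by $n$, has a convergent inner integral precisely when $np'>n$, i.e.\ for all $p<\infty$.

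Since $P_eV1=0$ and $P_eVx=0$ give $\int_{\R^n}V\phi(w)\,dw=0$ and $\int_{\R^n}w\,V\phi(w)\,dw=0$ for every zero-energy eigenfunction $\phi$, I would subtract from the factor $(R_0^+-R_0^-)(\lambda^2,|y-w|)$ in the $w$-integrand of $K(x,y)$ both the $w$-independent term $(R_0^+-R_0^-)(\lambda^2,|y|)$ and the term linear in $w$, namely $-\frac{y\cdot w}{|y|}\partial_r(R_0^+-R_0^-)(\lambda^2,|y|)$; neither alters the value of the integral. Writing $F(\lambda,r)=(R_0^+-R_0^-)(\lambda^2,r)$, an even entire function of $r$ whose radial derivatives are controlled by Lemma~\ref{lem:lambdaInt}, and splitting the $w$-integration at $|w|=\frac12|y|$ as in the proof of Proposition~\ref{prop:PV1}, on the inner region $|w|<\frac12|y|$ I would replace the remainder by the second-order Taylor identity
\begin{equation*}
F(\lambda,|y-w|)-F(\lambda,|y|)+\frac{y\cdot w}{|y|}\partial_r F(\lambda,|y|)
=\int_0^1(1-s)\Big(\frac{|w|^2-u_s^2}{|y-sw|}\,\partial_r F(\lambda,|y-sw|)+u_s^2\,\partial_r^2 F(\lambda,|y-sw|)\Big)\,ds ,
\end{equation*}
where $u_s=\frac{(y-sw)\cdot w}{|y-sw|}$ and $|u_s|\le |w|$.

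After inserting this into $K(x,y)$, multiplying by $R_0^+(\lambda^2,|x-z|)$, and integrating in $\lambda$, the two pieces — whose prefactors $\frac{|w|^2-u_s^2}{|y-sw|}$ and $u_s^2$ are independent of $\lambda$ — are handled by Lemma~\ref{lem:lambdaInt}: the $\partial_r^2F$ piece with $j=2$, which supplies the two extra powers of $\la|x-z|-|y-sw|\ra$, and the $\frac1{|y-sw|}\partial_r F$ piece with $j=1$ together with the elementary bound $\frac1{|y-sw|}\les \la y\ra^{-1}$, valid on $|w|<\frac12|y|$, $|y|>10$, which restores the power lost by using only $j=1$. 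Both pieces carry the weight $|w|^2$, and since $|V|\les\la\cdot\ra^{-n-3-}$ together with Lemma~\ref{lem:efn decay} yields $|V\phi|\les\la\cdot\ra^{-(2n+1)-}$, hence $|w^2 V\phi(w)|\les\la w\ra^{-(2n-1)-}$, one may apply Lemma~\ref{lem:A1} in $z$ and then Lemma~\ref{lem:B1} in $w$, exactly as in Proposition~\ref{prop:PV1}, to arrive at the desired $\la x\ra^{2-n}\la y\ra^{-n}$ bound on this portion. On the outer region $|w|>\frac12|y|$ I would treat the three terms separately: for the genuine term $(R_0^+-R_0^-)(\lambda^2,|y-w|)$ use Lemma~\ref{lem:lambdaInt} with $j=0$, then Lemma~\ref{lem:A1} in $z$, then Lemma~\ref{lem:largew} with $k=2$ (legitimate because the available decay $\la w\ra^{-(2n+1)-}$ exceeds the required $\la w\ra^{-(2n-3+k)}=\la w\ra^{-(2n-1)}$); for the two subtracted terms, which are $w$-independent apart from the scalars $1$ and $\frac{y\cdot w}{|y|}$, integrate $V\phi(w)$ and $w\,V\phi(w)$ over $|w|>\frac12|y|$ first, gaining $\la y\ra^{-n-1-}$ and $\la y\ra^{-n-}$ respectively from $\int_{|w|>|y|/2}\la w\ra^{-N}\,dw\les\la y\ra^{n-N}$, before applying Lemma~\ref{lem:lambdaInt} (with $j=0$, resp.\ $j=1$) and Lemma~\ref{lem:A1} in $z$. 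In the region $|y|>2|x|$ one has $\la|x|+|y|\ra\la|x|-|y|\ra^{m}\approx\la y\ra^{m+1}$, so every contribution collapses to $\les\la x\ra^{2-n}\la y\ra^{-n}$, proving the pointwise bound and hence the proposition.

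The step I expect to be the main obstacle is the $\frac1{|y-sw|}\partial_r F$ term of the second-order remainder: Lemma~\ref{lem:lambdaInt} supplies only one extra power of $\la\,\cdot\,\ra$ per radial derivative, so the argument works only because on $|w|<\frac12|y|$ the geometric prefactor $\frac1{|y-sw|}$ is itself comparable to $\la y\ra^{-1}$, which is what restores the second power of decay needed to reach the $\la y\ra^{-n}$ threshold. The other delicate point is the decay bookkeeping — the $|w|^2$ weight eats two orders of decay, and the inner and outer $w$-regions place slightly different demands on the potential (Lemma~\ref{lem:B1} versus Lemma~\ref{lem:largew} with $k=2$); the hypothesis $|V|\les\la\cdot\ra^{-n-3-}$, via Lemma~\ref{lem:efn decay}, is exactly calibrated so that both are met.
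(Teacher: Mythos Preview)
Your proposal is correct and follows essentially the same approach as the paper: the same second-order Taylor remainder (your formula is exactly the paper's \eqref{eqn:K2canc} with $\Gamma_1=(|w|^2-u_s^2)/|y-sw|$ and $\Gamma_2=u_s^2$), the same split at $|w|=\tfrac12|y|$, and the same invocations of Lemmas~\ref{lem:lambdaInt}, \ref{lem:A1}, \ref{lem:B1}, and \ref{lem:largew}. The only cosmetic difference is that the paper absorbs the factor $1/|y-sw|$ into the $\alpha$-parameter of Lemma~\ref{lem:B1} (taking $\alpha=2-j$) rather than bounding it by $\la y\ra^{-1}$ beforehand as you do, and the paper is less explicit than you are about the three outer-region terms.
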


\begin{proof}

We note that when $P_eV1, P_eVx=0$, we have the equality
\begin{multline}\label{eqn:2canc}
	\int_{\R^n}\int_0^\infty R_0^{+}(\lambda^2)(x,z) V(z)\phi(z)
	V(w) \phi(w) (R_0^+-R_0^-)(\lambda^2)(w,y)
	\frac{\tilde\Phi(\lambda)}{\lambda} \, d\lambda\,dw\\
	=\int_{\R^n} \int_0^\infty R_0^{+}(\lambda^2)(x,z) V\phi(z)
	V\phi(w) \Big[(R_0^+-R_0^-)(\lambda^2)(w,y) -F(\lambda,y) - G(\lambda,y)\frac{w\cdot y}{|y|}\Big]
	\frac{\tilde\Phi(\lambda)}{\lambda} \, d\lambda\,dw,
\end{multline}
for any functions $F(\lambda, y)$ and $G(\lambda, y)$.  
In place of \eqref{eqn:Taylor1}, we utilize the extra
level of cancellation to write
\begin{multline}\label{eqn:K2canc}
K(\lambda, |y-w|) - K(\lambda, |y|) + \partial_rK(\lambda,|y|) \frac{w \cdot y}{|y|}  \\
= \int_0^1 (1-s) \bigg[\partial_r^2 K(\lambda, |y-sw|)  \frac{(w \cdot (y-sw))^2}{|y-sw|^2}\\
+ \partial_r K(\lambda, |y-sw|)\Big(\frac{|w|^2}{|y-sw|} - \frac{(w \cdot (y-sw))^2}{|y-sw|^3}\Big)\bigg]\, ds.
\end{multline}
The formula above suggests that we choose
$F(\lambda, y)=(R_0^+-R_0^-)(\lambda^2,|y|)$ and
$G(\lambda,y)=\partial_r (R_0^+-R_0^-)(\lambda^2,|y|)$
in \eqref{eqn:2canc} respectively.

As in the proof of Proposition~\ref{prop:PV1}, whenever
$|w|>|y|/2$, we can use the decay of $V\phi(w)$ to limit its contribution to $K(x,y)$
to a term of size $\la x\ra^{2-n}\la y\ra^{-n}$ when $|y| > 2|x|$.  

If, on the other hand, $|w|<|y|/2$,
there are new terms to bound of the  form
\begin{multline*}
\int_{|w|<\frac{|y|}{2}}\int_{\R^n} \int_0^\infty \int_0^1 
V\phi(z) V\phi(w)
R_0^+(\lambda^2,|x-z|)
\partial_r^j\big((R_0^+ -R_0^-)(\lambda^2,|y-sw|)\big) \\
(1-s)\Gamma_j(s,w,y)
\frac{\tilde\Phi(\lambda)}{\lambda}\,ds\,d\lambda \, dz\,dw
\end{multline*}
with $j=1,2$ and
$\Gamma_j(s,w,y)$ denoting
$$
	\Gamma_1 = \Big(\frac{|w|^2}{|y-sw|} - \frac{(w \cdot (y-sw))^2}{|y-sw|^3}\Big), \qquad
	\textrm{and} \qquad  \Gamma_2 = \frac{(w \cdot (y-sw))^2}{|y-sw|^2}.
$$  
Within the range $|w| < |y|/2$ and $0 \leq s \leq 1$, these factors observe the bounds 
$|\Gamma_1(s,w,y)| \les |y|^{-1}|w|^2$ and $|\Gamma_2(s,w,y)| \leq |w|^2$.
The calculation continues in the same manner as in Proposition~\ref{prop:PV1}, first
using Lemma~\ref{lem:lambdaInt} with $j=1,2$, then
Lemma~\ref{lem:A1} in the $z$ integral and Lemma~\ref{lem:B1} (with $\alpha = 2-j$) 
in the $w$ integral.  
For both values of $j$ we arrive at the bound $\la x\ra^{2-n} \la y\ra^{-n}$ when $|y|>2|x|$.

Put together with the previous claim, this implies that in fact
\begin{equation} \label{eqn:Kboundcanc}
|K(x,y)| \les \frac{1}{\la x\ra^{n-2}\la y\ra^{n}} \ \text{when} \ |y| > 2|x|. 
\end{equation} 
The estimates from~\eqref{eqn:Kbound} sill hold when $|x| \approx |y|$ and $|x| > 2|y|$.
In the region where $|y| > 2|x|$ we can imitate the calculation in~\eqref{eqn:Ky bigg x}
and find convergent integrals so long as $p' > 1$.
The operator with kernel $K(x,y)$ is therefore 
bounded on $L^p(\R^n)$ for all $p \in [1,\infty)$.

The extra growth of $|w|^2$ in the size of $\Gamma_j(s,w,y)$ dictates the amount of decay we need on the potential.
We must use
Lemma~\ref{lem:B1} with $\beta=n-3+j$, which
requires $|w|^2|V\phi(w)| \les \la w\ra^{1-2n}$, from which 
Lemma~\ref{lem:efn decay} shows that $|V(w)|\les \la w\ra^{-n-3-}$ is needed.
\end{proof}

\begin{rmk}
It appears to be possible to make an analogous cancellation argument to improve decay with respect to $x$
in the region $|x| > 2|y|$.  Given the existing strength of~\eqref{eqn:Kbound} here, the benefits of doing so
are unclear.  When $|x| \approx |y|$, cancellation only leads to improvement in the exponent of $\la |x| - |y|\ra$,
which does not affect the integrability properties of $K(x,y)$ in a meaningful way.
\end{rmk}

We can now prove the main Theorem.

\begin{proof}[Proof of Theorem~\ref{thm:main}]

We first prove the desired results for $n>3$ odd.
In this case, one has the expansion
$W_<=\Phi(H)(1-(W_{r,0}+W_r+W_{s,1}+W_{s,2}))\Phi(H_0)$,
where
\begin{equation}
\begin{aligned}
	W_{r,0}&=\frac{1}{\pi i} \int_0^\infty R_0^+(\lambda^2)
	V(R_0^+(\lambda^2)-R_0^-(\lambda^2))\lambda\, d\lambda\\
	W_{r}&=\frac{1}{\pi i} \int_0^\infty R_0^+(\lambda^2)
	VA_0(\lambda)(R_0^+(\lambda^2)-R_0^-(\lambda^2))
	\tilde \Phi(\lambda)\lambda\, d\lambda\\
	W_{s,1}&=\frac{1}{\pi i} \int_0^\infty R_0^+(\lambda^2)
	VA_{-1}(R_0^+(\lambda^2)-R_0^-(\lambda^2))
	\tilde \Phi(\lambda)\, d\lambda\\
	W_{s,2}&=\frac{1}{\pi i} \int_0^\infty R_0^+(\lambda^2)
	VP_eV(R_0^+(\lambda^2)-R_0^-(\lambda^2))
	\tilde \Phi(\lambda)\lambda^{-1}\, d\lambda \label{eqn:Ws2}
\end{aligned}
\end{equation}
Here the subscripts
$r$ denotes regular and $s$ denotes singular terms.
In \cite{Yaj}, Yajima shows that the first two `regular' terms are
bounded on $L^p$ for $1\leq p\leq \infty$.  In particular,
it is shown that $\|W_{r,0}u\|_p\les \|\mathcal F (\la \cdot \ra^{\sigma}V)\|_{L^{n_*}}\|u\|_p$ for $\sigma>\frac{1}{n_*}$.

	Under the assumptions on the decay of $V$ and
	\eqref{Vfour}, it was shown in \cite{Yaj} that
	$W_>$, $\Phi(H)(1-(W_{r,0}+W_r))\Phi(H_0)$
	are all bounded on $L^p$ for $1\leq p\leq \infty$.
	Further, $W_{s,1}=0$ if $n>5$.  We note that, if $n=5$
	and $P_eV1=0$ that $W_{s,1}=0$, Section 3.2.2 in \cite{Yaj}.  Thus, we need only bound
	$W_{s,2}$ for all $n>4$.

	The kernels of $\Phi(H)$ and $\Phi(H_0)$ are bounded
	by $C_N \la x-y \ra^{-N}$ for each $N=1,2,\dots$, see
	Lemma~2.2 of \cite{YajWkp3}.  Following \eqref{eqn:Ws2}, 
	\eqref{eqn:Ws22}, $W_{s,2}$ is
	bounded on $L^p$ exactly when the operators
	$K^{jk}$ are.  Proposition~\ref{prop:Kjkreg} proves the
	first claim for all $n>5$,
	Proposition~\ref{prop:PV1}
	proves the second, while Proposition~\ref{prop:PVx}
	proves the third.
	
	We need to make a few adjustments to our approach when
	$n=5$ if there is no cancellation.  The operator 
	$W_{s,1}$ may be rewritten as
	\begin{align*}
		\frac{1}{\pi i} \int_0^\infty R_0^+(\lambda^2)
		[V(P_0V) \otimes V(P_0V)](R_0^+(\lambda^2)-R_0^-(\lambda^2))
		\tilde \Phi(\lambda)\, d\lambda,
	\end{align*}
	where $P_0V=\sum \phi_j \la V, \phi_j \ra$ is a function
	with the same decay properties as an eigenfunction
	in Lemma~\ref{lem:efn decay}.  The $L^p$ boundedness
	follows by using Corollary~\ref{cor:lambdaInt}
	and modifying Proposition~\ref{prop:Kjkreg} accordingly.

	The proof for $n$ even is quite similar.  We note that
	\cite[Section 2.2]{YajNew} allows us to express
	$W_<=\Phi(H)(W_{r}+W_{\log}+W_{s,2})\Phi(H_0)$.
	When $n>10$, it is shown there that $W_{\log}$ vanishes
	and $W_r$ is bounded on $L^p(\R^n)$ for all $1\leq p\leq \infty$.    These results,
	plus our Proposition~\ref{prop:Kjkreg}, establish the first
	claim of Theorem~\ref{thm:main}.  Noting equation (2.19) in \cite{YajNew}, if
	$n > 6$ and $P_eV1=0$, the operator $W_{\log}$ vanishes, so
	Proposition~\ref{prop:PV1}
	proves the second claim, while 
	Proposition~\ref{prop:PVx}
	proves the third.  

	When $n=8,10$,
	one also needs to control the contribution 
	of $W_{\log}$ in order to prove the first claim of Theorem~\ref{thm:main}.
	When $n=6$ it is not clear that $W_{\log}$ vanishes under any of the
	given cancellation conditions.  We show that it is bounded on $L^p(\R^6)$ for all
	$1 \leq p < \infty$, which is sufficient to complete the proof for all three claims
	in Theorem~\ref{thm:main}.
	The additional arguments require some modification of the main
	techniques used and are presented separately in Section~\ref{sec:8,10}
	below.

\end{proof}

\begin{rmk}\label{PVx2 rmk}

	We note that the endpoint
	$p=\infty$ is not covered in our analysis when
	$P_eV1,P_eVx=0$.  If, we add in the additional
	assumption $P_eVx^2=0$ and $|V(z)|\les \la z\ra^{-n-5-}$, 
	we can use the techniques
	above to show that the wave operators are bounded
	on $L^p(\R^n)$ for the full range of $1\leq p\leq \infty$.  Here the assumption $P_eVx^2=0$ means
	that $\int_{\R^n} P_2(x)V(x)\phi(x)\, dx=0$ for
	any quadratic monomial $P_2$.
	We leave the details to the reader.

\end{rmk}

\section{Integral Estimates}\label{sec:appA}

\subsection{Proof of Lemma~\ref{lem:lambdaInt}}
We first recall the inequality claimed in Lemma~\ref{lem:lambdaInt}, namely
\begin{equation*}
	\int_0^\infty R_0^+(\lambda^2, A)\partial_B^j\big(R_0^+ - R_0^-\big)(\lambda^2,B)
	\lambda^{-1}\tilde\Phi(\lambda)\, d\lambda
	\les \begin{cases} \frac{1}{A^{n-2} \la A\ra^{n-2+j}} & \text{ if } A > 2B \\
	\frac{1}{A^{n-2} \la B\ra^{n-2+j}} & \text{ if } B > 2A \\
	\frac{1}{A^{n-2} \la A\ra \la A - B\ra^{n-3+j}}
	& \text{ if } A \approx B
	\end{cases} .
\end{equation*}

The proof is a lengthy exercise in integration by parts.  The following elementary bound
will be invoked repeatedly.
\begin{lemma} \label{lem:IBP}
Suppose there exists $\beta > -1$ and $M > \beta+1$
such that $|F^{(k)}(\lambda)| \les \lambda^{\beta - k}$ for all $0 \leq k \leq M$.
Then given a smooth cutoff function $\tilde\Phi$,
\begin{equation} \label{eqn:IBP}
\Big|\int_0^\infty e^{i\rho \lambda}F(\lambda) \tilde\Phi(\lambda)\,d\lambda \Big| 
\les \la \rho\ra^{-\beta-1}.
\end{equation}
If $F$ is further assumed to be smooth and supported in the annulus $L \les \lambda \les 1$
for some $L > \rho^{-1}>0$, then
\begin{equation} \label{eqn:IBP2}
\Big|\int_0^\infty e^{i\rho \lambda}F(\lambda) \tilde\Phi(\lambda)\,d\lambda \Big| \les \la \rho\ra^{-M}L^{\beta+1-M}.
\end{equation}
\end{lemma}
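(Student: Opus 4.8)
The plan is to establish both estimates by repeated integration by parts in $\lambda$, exploiting the oscillation $e^{i\rho\lambda}$ against the polynomial-type decay of $F$ and its derivatives. For \eqref{eqn:IBP}, the point is that integration by parts trades a factor of $\la\rho\ra^{-1}$ for a derivative on $F\tilde\Phi$, and the hypothesis $|F^{(k)}(\lambda)|\les\lambda^{\beta-k}$ says that each such derivative costs only one power of $\lambda$. Since $\beta>-1$, the integrand $F(\lambda)\tilde\Phi(\lambda)$ is itself integrable near $\lambda=0$, giving the trivial bound $\les 1$ when $\la\rho\ra\les 1$, so we may assume $\rho\geq 1$. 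I would split the integral at $\lambda=\rho^{-1}$: on $\lambda<\rho^{-1}$ the integrand is bounded by $\lambda^\beta$, whose integral over $(0,\rho^{-1})$ is $\les\rho^{-\beta-1}$; on $\lambda>\rho^{-1}$ one integrates by parts. Each integration by parts produces a boundary term at $\lambda=\rho^{-1}$ of size $\rho^{-1}\cdot(\rho^{-1})^{\beta-(k-1)}\cdot\rho^{-(k-1)}$-type contributions (tracking the $\rho^{-1}$ from the $e^{i\rho\lambda}/(i\rho)$ antiderivative together with $|F^{(k-1)}(\rho^{-1})|$), which all collapse to $\rho^{-\beta-1}$, plus a remaining integral $\rho^{-1}\int_{\rho^{-1}}^\infty e^{i\rho\lambda}\partial_\lambda(F\tilde\Phi)\,d\lambda$. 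Applying Leibniz and the derivative hypothesis, after $k$ steps the remaining integral is bounded by $\rho^{-k}\int_{\rho^{-1}}^1 \lambda^{\beta-k}\,d\lambda$; choosing $k$ to be the integer with $k-1\leq\beta+1<k$ (so $\beta-k<-1$) makes this integral $\les\rho^{-k}\cdot(\rho^{-1})^{\beta-k+1}=\rho^{-\beta-1}$, and there is no boundary term at $\lambda=\infty$ since $\tilde\Phi$ is compactly supported. Collecting terms gives \eqref{eqn:IBP}.

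For \eqref{eqn:IBP2}, the setup is cleaner: $F$ is smooth and supported in $L\les\lambda\les 1$ with $L>\rho^{-1}$, so there are no boundary terms at either endpoint and no issue at $\lambda=0$. Here I would integrate by parts exactly $M$ times, each step contributing $\la\rho\ra^{-1}$ and, via Leibniz on $F\tilde\Phi$, at most one derivative landing on $F$. The resulting integral is $\la\rho\ra^{-M}\int_L^1|F^{(M)}(\lambda)|\,d\lambda\les\la\rho\ra^{-M}\int_L^1\lambda^{\beta-M}\,d\lambda$. Since $M>\beta+1$, the exponent $\beta-M<-1$, so the integral is dominated by its lower endpoint and is $\les L^{\beta-M+1}=L^{\beta+1-M}$, yielding \eqref{eqn:IBP2}. (Derivatives of $\tilde\Phi$ are harmless: they are bounded and supported away from nothing problematic, and combine with lower-order derivatives of $F$ to give strictly better bounds, since replacing a derivative on $F$ by a derivative on $\tilde\Phi$ only improves the power of $\lambda$.)

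The main obstacle is bookkeeping rather than anything conceptual: one must verify that every term generated by the Leibniz rule at every stage of the iteration—boundary contributions at $\lambda=\rho^{-1}$ in the first part, and the various mixed derivative terms $F^{(i)}\tilde\Phi^{(k-i)}$—obeys the claimed bound, and in particular that the worst case is the one where all derivatives fall on $F$. A small point worth stating carefully is the choice of how many times to integrate by parts in \eqref{eqn:IBP}: one needs $\beta-k<-1$ for the tail integral to converge at its lower limit with the right power, and one needs $M>\beta+1$ from the hypothesis precisely to guarantee that enough derivative bounds on $F$ are available to carry out that many steps. Once the splitting point $\lambda=\rho^{-1}$ is fixed and the induction on the number of integrations by parts is organized, both inequalities follow by elementary estimates on $\int\lambda^{\gamma}\,d\lambda$.
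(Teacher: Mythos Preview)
Your proposal is correct and follows essentially the same approach as the paper: split at $\lambda=\rho^{-1}$ for \eqref{eqn:IBP}, estimate the low-$\lambda$ piece directly, and integrate by parts on the high-$\lambda$ piece, then handle \eqref{eqn:IBP2} by $M$ integrations by parts with no boundary terms. The only cosmetic difference is that the paper integrates by parts $M$ times in both cases (rather than the minimal $k>\beta+1$ you use for \eqref{eqn:IBP}) and records the Leibniz bookkeeping via the single observation $|\tilde\Phi^{(k)}(\lambda)|\les\lambda^{-k}$ on the support of $\tilde\Phi$, which is exactly your parenthetical remark stated more compactly.
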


\begin{proof}
It is immediately true that~\eqref{eqn:IBP} is uniformly bounded.  Assume $\rho \gtrsim 1$.
Then 
\begin{equation*}
\bigg|\int_0^{\rho^{-1}} e^{i\rho\lambda}F(\lambda) \tilde\Phi(\lambda)\,d\lambda \bigg| \les 
\int_0^{\rho^{-1}} \lambda^{\beta}\,d\lambda \les \rho^{-\beta-1}.
\end{equation*}
By repeated integration by parts,
\begin{equation*}
\bigg|\int_{\rho^{-1}}^\infty e^{i\rho\lambda}F(\lambda) \tilde\Phi(\lambda) \,d\lambda \bigg| \les
\sum_{k=1}^M \rho^{-k}|F^{(k-1)}(\rho^{-1})| 
+\rho^{-M} \int_{\rho^{-1}}^\infty \bigg| \big({\textstyle \frac{d}{d\lambda}}\big)^M (F\tilde\Phi)(\lambda)
\bigg|\,d\lambda
\les \rho^{-\beta-1}.
\end{equation*}
We also use the fact that $|\tilde \Phi^{(k)}(\lambda)|\les \lambda^{-k}$ in the last inequality.

In the second case, the integral over $0 \leq \lambda \leq \rho^{-1}$ is empty, and no
boundary terms are created by the integration by parts.  Then
\begin{equation*}
\Big|\int_0^\infty e^{i\rho \lambda}F(\lambda) \tilde\Phi(\lambda)\,d\lambda \Big| 
\les \rho^{-M} \int_L^\infty  \big({\textstyle \frac{d}{d\lambda}}\big)^M (F\tilde\Phi)(\lambda)\,d\lambda
\les \rho^{-M} L^{\beta+1-M}.
\end{equation*}
\end{proof}

\begin{proof}[Proof of Lemma~\ref{lem:lambdaInt}]
For complex $z \in {\mathbb C} \setminus [0,\infty)$, the free resolvent $(H_0-z)^{-1}$ is a convolution operator whose
kernel may be expressed in terms of Hankel functions.
\begin{align}\label{Hankel}
	R_0(z)(x,y)=\frac{i}{4} \bigg(\frac{z^{1/2}}{2\pi |x-y|}\bigg)^{\frac{n}{2}-1} H_{\frac{n}{2}-1}^{(1)}(z^{1/2}|x-y|).
\end{align}
Here $H_{\frac{n}{2}-1}^{(1)}(\cdot)$ is the Hankel function of the first kind and $z^{1/2}$ is defined to
take values in the upper halfplane.   The Hankel functions of half-integer order, which arise when $n$ is odd,
can be expressed more simply as the product of an exponential function and a polynomial.

Formula~\eqref{Hankel} yields a set of asymptotic descriptions of $R_0^\pm(\lambda^2, A)$,
based on the properties of Hankel and Bessel functions in~\cite{AS}.
We will use the fact that
$R_0^\pm(\lambda^2, A) \approx A^{2-n}$ when $\lambda A \les 1$, and 
$R_0^\pm(\lambda^2, A) = e^{\pm i\lambda A} A^{2-n} \Psi_{\frac{n-3}{2}}(\lambda A)$
when $\lambda A \gtrsim 1$.  In odd dimensions, the function $\Psi_{\frac{n-3}{2}}$ is actually a polynomial of order
$\frac{n-3}{2}$.  In even dimensions it is a function that asymptotically behaves like $(\,\cdot\, )^{(n-3)/2}$
and whose $k^{th}$ derivative behaves like $(\,\cdot\,)^{(n-3-2k)/2}$.

For the difference of resolvents, we have the low-energy description
$R_0^+(\lambda^2,B) - R_0^-(\lambda^2,B) = 
(\frac{\lambda}{B})^{\frac{n-2}{2}} J_{\frac{n-2}{2}}(\lambda B)
\approx \lambda^{n-2}$.  This is
a real-analytic function of $\lambda$ and $B$ so it has well behaved derivatives of all orders.
We concisely describe the resolvent kernels for $n \geq 3$ and their
differences as follows:

\begin{align}
R_0^\pm(\lambda^2, A) &= \frac{1}{A^{n-2}}\Omega (\lambda A) + \frac{e^{\pm i\lambda A}}{A^{n-2}}
\Psi_{\frac{n-3}{2}}(\lambda A), \\
\label{eqn:asymptotics1}
R_0^+(\lambda^2, B) - R_0^-(\lambda^2, B) &= \lambda^{n-2}\Omega(\lambda B)
+ \frac{e^{i\lambda B}}{B^{n-2}} \Psi_{\frac{n-3}{2}}(\lambda B)
+ \frac{e^{-i\lambda B}}{B^{n-2}} \Psi_{\frac{n-3}{2}}(\lambda B), \\
\label{eqn:asymptotics}
\partial_B^j\big(R_0^+(\lambda^2,B) - R_0^-(\lambda^2,B)\big) &= \lambda^{n-2+j} \Omega(\lambda B)
  + \frac{e^{i\lambda B}}{B^{n-2}} \lambda^j \Psi_{\frac{n-3}{2}}(\lambda B) \\
& \qquad+ \frac{e^{-i\lambda B}}{B^{n-2}} \lambda^j \Psi_{\frac{n-3}{2}}(\lambda B). \notag
\end{align}
where $\Omega$ is a bounded compactly supported function that is smooth everywhere except possibly at zero,
and each $\Psi_{\frac{n-3}{2}}$ is a smooth function
supported outside the unit interval with the asymptotic behavior specified above.
In odd dimensions, the polynomial $\Psi_{\frac{n-3}{2}}(\lambda B)$ in~\eqref{eqn:asymptotics}
may include negative powers of order up to $(\lambda B)^{-j}$.  Since the support of $\Psi_{\frac{n-3}{2}}$ is
bounded away from zero, the presence of negative powers is of little consequence.

With some abuse of notation, we use $\Omega$ and $\Psi_{\frac{n-3}{2}}$ to describe several distinct functions.
It should be understood that all we need from these functions are the properties that this
notation yields.  As a relevant example,
we say that $\frac{d}{d\lambda} \Psi_{\frac{n-3}{2}}(\lambda A)
= A \Psi_{\frac{n-5}{2}}(\lambda A) = \lambda^{-1} \Psi_{\frac{n-3}{2}}(\lambda A)$ and
$\frac{d}{dA} \Psi_{\frac{n-3}{2}}(\lambda A) = \lambda \Psi_{\frac{n-5}{2}}(\lambda A)
= A^{-1} \Psi_{\frac{n-3}{2}}(\lambda A)$.  The new functions $\Psi_{\frac{n-3}{2}}$ retain the general 
properties of the original, but with different constants and/or coefficients.

First case: $A > 2B$. We split the integral into four pieces:
\begin{equation} \label{eqn:intAlarge}
\begin{aligned}
\int_0^\infty 
\bigg(\frac{\lambda^{n-3+j}}{A^{n-2}}\Omega(\lambda A) \Omega(\lambda B) 
&+
\frac{e^{i\lambda A}}{A^{n-2}}\lambda^{n-3+j}\Psi_{\frac{n-3}{2}}(\lambda A) \Omega(\lambda B) \\
&+ \frac{e^{i\lambda(A- B)}}{A^{n-2}B^{n-2}} \lambda^{j-1}
	\Psi_{\frac{n-3}{2}}(\lambda A)\Psi_{\frac{n-3}{2}}(\lambda B) \\
&+ \frac{e^{i\lambda(A+ B)}}{A^{n-2}B^{n-2}} \lambda^{j-1}
	\Psi_{\frac{n-3}{2}}(\lambda A) \Psi_{\frac{n-3}{2}}(\lambda B)
\bigg)\tilde\Phi(\lambda)\,d\lambda.
\end{aligned}
\end{equation}
The first term is supported where $\lambda \les \la A\ra^{-1}$ and is bounded pointwise by $A^{2-n}\lambda^{n-3+j}$,
so its integral is clearly dominated by $A^{2-n}\la A\ra^{2-n-j}$.  All other terms are empty unless $A \gtrsim 1$.

The second term of~\eqref{eqn:intAlarge} is supported in an interval with $\frac{1}{A} \les \lambda \les \frac{1}{B}$,
and goes to zero smoothly at both endpoints.  We can write
\begin{equation*}
\lambda^{n-3+j}\Psi_{\frac{n-3}{2}}(\lambda A) \Omega(\lambda B)
= A^{\frac{n-3}{2}}F(\lambda).
\end{equation*}
Observe that $F(\lambda)$ satisfies the conditions of Lemma~\ref{lem:IBP} with $\beta = \frac{3n-9}{2}+j$, and
constants uniform in $A$ and $B$.  If any derivatives fall on $\Omega(\lambda B)$, the resulting factor of 
$B$ is dominated by $\lambda^{-1}$ inside the support of $F(\lambda)$.
Thus
\begin{equation*}
\int_0^\infty \frac{e^{i\lambda A}}{A^{n-2}}\lambda^{n-3+j}\Psi_{\frac{n-3}{2}}(\lambda A) \Omega(\lambda B)
\tilde\Phi(\lambda)\,d\lambda \les \frac{1}{A^{\frac{n-1}{2}}\la A\ra^{\frac{3n-7}{2}+j}}
\les \frac{1}{A^{n-2}\la A\ra^{n-2+j}}.
\end{equation*}

The third and fourth terms of~\eqref{eqn:intAlarge} are quite similar, so we will consider only the third term here.
This is supported in an interval with $\frac{1}{B} \les \lambda \les 1$ and goes smoothly to zero at both endpoints.
We can write
\begin{equation*}
\lambda^{j-1}\Psi_{\frac{n-3}{2}}(\lambda A) \Psi_{\frac{n-3}{2}}(\lambda B)
= A^{\frac{n-3}{2}}B^{\frac{n-3}{2}}F(\lambda),
\end{equation*}
where $F(\lambda)$ satisfies the stronger conditions of Lemma~\ref{lem:IBP} 
with $\beta = n-4+j$ and $L = B^{-1}$.  Thus
\begin{align*}
\Big| \int_0^\infty \frac{e^{i\lambda(A- B)}}{A^{n-2}B^{n-2}} \lambda^{j-1}
\Psi_{\frac{n-3}{2}}(\lambda A) \Psi_{\frac{n-3}{2}}(\lambda B)\tilde\Phi(\lambda)\,d\lambda \Big|
&\les \frac{1}{A^{\frac{n-1}{2}}B^{\frac{3n-7}{2}+j-M}\la A- B\ra^M} \\
&\les \frac{1}{A^{n-2}\la A\ra^{n-2+j}},
\end{align*}
using the assumptions $A > 2B$ (so that $\la A-B \ra \approx \la A \ra$), $A \gtrsim 1$, 
and choosing $M \geq \frac{3n-7}{2}+j$.  The fourth term
is less delicate, as we have identical bounds with $\la A+B\ra$ in place of
$\la A-B\ra$.

Second case: $B > 2A$.
This time the integral splits into five terms:
\begin{multline} \label{eqn:intBlarge}
\int_0^\infty 
\bigg(\frac{\lambda^{n-3+j}}{A^{n-2}}\Omega(\lambda A) \Omega(\lambda B) 
+
\frac{e^{i\lambda B}}{(AB)^{n-2}}\lambda^{j-1}\Psi_{\frac{n-3}{2}}(\lambda B) \Omega(\lambda A) \\
+ \frac{e^{-i\lambda B}}{(AB)^{n-2}}\lambda^{j-1}\Psi_{\frac{n-3}{2}}(\lambda B) \Omega(\lambda A) 
+ \frac{e^{i\lambda(A- B)}}{(AB)^{n-2}} \lambda^{j-1}
	\Psi_{\frac{n-3}{2}}(\lambda A) \Psi_{\frac{n-3}{2}}(\lambda B) \\
+ \frac{e^{i\lambda(A+ B)}}{(AB)^{n-2}} \lambda^{j-1}
	\Psi_{\frac{n-3}{2}}(\lambda A) \Psi_{\frac{n-3}{2}}(\lambda B)
\bigg)\tilde\Phi(\lambda)\,d\lambda.
\end{multline}

The first term is supported where $\lambda \les \la B\ra^{-1}$ and is bounded pointwise by
$A^{2-n}\lambda^{n-3+j}$, so its integral is dominated by $A^{2-n}\la B\ra^{2-n-j}$ as desired.
All other terms are empty unless $B \gtrsim 1$.

The second and third terms are similar, so we will only consider the second term here.
We can write
\begin{equation*}
\lambda^{j-1} \Psi_{\frac{n-3}{2}}(\lambda B) \Omega(\lambda A) = B^{\frac{n-3}{2}}F(\lambda),
\end{equation*}
where $F(\lambda)$ satisfies the conditions of Lemma~\ref{lem:IBP} with $\beta = \frac{n-5}{2}+j$.
Note that $A \les \lambda^{-1}$ inside the support of $\Omega(\lambda A)$.  As a consequence,
\begin{equation*}
\Big|\int_0^\infty \frac{e^{i\lambda B}}{(AB)^{n-2}}
\lambda^{j-1} \Psi_{\frac{n-3}{2}}(\lambda B) \Omega(\lambda A) \tilde\Phi(\lambda) \,d\lambda \Big|
\les \frac{1}{A^{n-2}B^{\frac{n-1}{2}}\la B\ra^{\frac{n-3}{2}+j}} \les \frac{1}{A^{n-2}\la B\ra^{n-2+j}}
\end{equation*}
since $B \gtrsim 1$.

The fourth and fifth terms are also very similar, and we consider only the fourth term here.
The integrand is supported in an interval with $\frac{1}{A} \les \lambda \les 1$ and goes
smoothly to zero at both endpoints.  We can write
\begin{equation*}
\lambda^{j-1} \Psi_{\frac{n-3}{2}}(\lambda A) \Psi_{\frac{n-3}{2}}(\lambda B) 
= (AB)^{\frac{n-3}{2}} F(\lambda),
\end{equation*}
where $F(\lambda)$ satisfies the stronger conditions of Lemma~\ref{lem:IBP} with $\beta = n-4+j$
and $L = \frac{1}{A}$.  It follows from~\eqref{eqn:IBP2} that
\begin{equation*}
\Big| \int_0^\infty \frac{e^{i\lambda(A- B)}}{(AB)^{n-2}} \lambda^{j-1}
	\Psi_{\frac{n-3}{2}}(\lambda A) \Psi_{\frac{n-3}{2}}(\lambda B) \tilde\Phi(\lambda) \Big|
\les \frac{1}{A^{\frac{3n-7}{2}+j-M}B^{\frac{n-1}{2}}\la B \ra^{M}} 
\les \frac{1}{A^{n-2}\la B\ra^{n-2+j}}
\end{equation*}
using the assumption that $B > 2A$ and choosing $M \geq n-3+j > \frac{n-3}{2}+j$. 

Third case: $A \approx B$.  If one is overly cautious, there might be as many as six terms to consider
in the integral.
\begin{multline} \label{eqn:intAB}
\int_0^\infty 
\bigg(\frac{\lambda^{n-3+j}}{A^{n-2}} \Omega(\lambda A) \Omega(\lambda B) 
+
\frac{e^{i\lambda A}}{A^{n-2}}\lambda^{n-3+j}\Psi_{\frac{n-3}{2}}(\lambda A) \Omega(\lambda B) \\
+
\frac{e^{i\lambda B}}{(AB)^{n-2}}\lambda^{j-1}\Psi_{\frac{n-3}{2}}(\lambda B) \Omega(\lambda A) 
+ \frac{e^{-i\lambda B}}{(AB)^{n-2}}\lambda^{j-1} \Psi_{\frac{n-3}{2}}(\lambda B) \Omega(\lambda A) \\ 
+ \frac{e^{i\lambda(A- B)}}{A^{n-2}B^{n-2}} \lambda^{j-1}
	\Psi_{\frac{n-3}{2}}(\lambda A) \Psi_{\frac{n-3}{2}}(\lambda B) \\
+ \frac{e^{i\lambda(A+ B)}}{A^{n-2}B^{n-2}} \lambda^{j-1}
	\Psi_{\frac{n-3}{2}}(\lambda A) \Psi_{\frac{n-3}{2}}(\lambda B)
\bigg)\tilde\Phi(\lambda)\,d\lambda.
\end{multline}

The first term is supported where $\lambda \les \la A\ra^{-1}$ and is bounded pointwise by
$A^{2-n}\lambda^{n-3+j}$, so its integral is dominated by $A^{2-n}\la A\ra^{2-n-j}$.  Recalling that $A\approx B$,
all other terms
are empty unless $A,B \gtrsim 1$.

Note that the next three terms all contain products with both $\Psi_{\frac{n-3}{2}}$ and $\Omega$.
Since $A \approx B$, these products are supported in an interval $\lambda \approx A^{-1}$ and have size
bounded by $1$.  Consequently each of the integral terms is bounded by $A^{4-2n-j}$ and is zero for small $A$.

The fifth term plays a very significant role.  We can write
\begin{equation*}
\lambda^{j-1} \Psi_{\frac{n-3}{2}}(\lambda A) \Psi_{\frac{n-3}{2}}(\lambda B) = A^{n-3} F(\lambda),
\end{equation*}
where $F(\lambda)$ satisfies the conditions of Lemma~\ref{lem:IBP} with $\beta = n-4+j$.  Consequently,
\begin{equation*}
\Big| \int_0^\infty \frac{e^{i\lambda(A- B)}}{A^{n-2}B^{n-2}} \lambda^{j-1}
	\Psi_{\frac{n-3}{2}}(\lambda A)\Psi_{\frac{n-3}{2}}(\lambda B) \tilde\Phi(\lambda\,d\lambda \Big|
\les \frac{1}{A^{n-1}\la A- B\ra^{n-3+j}},
\end{equation*}
and is also zero for small $A$.

The final term is treated in much the same way, however the phase function $e^{i\lambda(A+B)}$
leads to a stronger bound of $A^{1-n}\la A + B\ra^{3-n-j} \approx A^{4-2n-j}$, since $A\gtrsim 1$.

\end{proof}

The proof of Corollary~\ref{cor:lambdaInt} follows by the
observation that multiplication by $\lambda$ is essentially
the same as taking a radial derivative $\partial_B \big(R_0^+(\lambda^2,B) - R_0^-(\lambda^2,B)\big)$, see
\eqref{eqn:asymptotics}.

\subsection{Proof of Lemma~\ref{lem:ring nocanc}}

The main inequality in Lemma~\ref{lem:ring nocanc} is as follows.
\begin{equation*}
\iint_{\R^{2n}}\frac{|V\phi(z)| |V\phi(w)|\,dz dw}{|x-z|^{n-2} 
	\la |x-z| + |y-w|\ra
	\la |x-z| - |y-w| \ra^{n-3}}
	\les \frac{1}{\la x\ra^{n-2} \la |x| + |y| \ra 
	\la |x| - |y|\ra^{n-3}}.
\end{equation*}

The integrals will be set up in spherical coordinates, with radial variables $|x-z|$ and $|y-w|$, 
so the first step is the following estimate of integrals along shells.

\begin{lemma}\label{lem:shells}

	If $N>n-1$, then 
	\begin{align*}
		\bigg|\int_{|x-z|=r}\la z\ra^{-N} dz \bigg|
		\les \left\{\begin{array}{ll}
		r^{n-1} \la x\ra^{-N} & r<\frac{1}{2}|x|\\
		|x|^{n-1}\la x\ra^{1-n} \la |x|-r\ra^{n-1-N} 
		& \frac{1}{2}|x|\leq r\leq 2|x|\\
		r^{n-1}\la r\ra^{-N} & r>2|x|
		\end{array}
		\right.
	\end{align*}

\end{lemma}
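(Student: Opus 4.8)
The plan is to estimate the surface integral $\int_{|x-z|=r}\la z\ra^{-N}\,dz$ by splitting the sphere of radius $r$ centered at $x$ into the region near its closest approach to the origin and its complement, taking advantage of the fact that $\la z\ra^{-N}$ is largest where $|z|$ is smallest. First I would observe that for any point $z$ on the sphere $\{|x-z|=r\}$ one has $|z|\ge \big||x|-r\big|$, so if $r<\frac12|x|$ or $r>2|x|$ every point on the sphere satisfies $|z|\gtrsim\max(|x|,r)$, the sphere has surface area $\approx r^{n-1}$, and one immediately gets the bounds $r^{n-1}\la x\ra^{-N}$ and $r^{n-1}\la r\ra^{-N}$ respectively. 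So the only real work is the middle regime $\frac12|x|\le r\le 2|x|$, where the sphere passes close to the origin.

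In that regime I would parametrize points on the sphere by their distance $\rho=|z|$ from the origin, which ranges over $\big||x|-r\big|\le\rho\lesssim |x|$. For a fixed $\rho$ in this range, the set of $z$ on the sphere with $|z|=\rho$ is an $(n-2)$-sphere; its $(n-2)$-dimensional measure, together with the Jacobian from the coarea/law-of-cosines change of variables $z\mapsto\rho$, contributes a factor that is bounded by $|x|^{n-2}$ up to constants (this is where the $|x|^{n-1}\la x\ra^{1-n}$ prefactor will ultimately come from, combining $|x|^{n-2}$ with one more power of $|x|$ and dividing by $\la x\ra^{n-1}$ to account for the $\la\cdot\ra$ versus $|\cdot|$ discrepancy when $|x|$ is small). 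The remaining integral is then $\lesssim |x|^{n-2}\int_{\la |x|-r\ra}^{\lesssim\la x\ra}\rho^{-N}\,d\rho$, and since $N>n-1\ge 1$ the integral is dominated by its lower endpoint, giving $\lesssim |x|^{n-2}\la |x|-r\ra^{1-N}$. Comparing exponents, $\la|x|-r\ra^{n-1-N}=\la|x|-r\ra^{1-N}\cdot\la|x|-r\ra^{n-2}$ and $\la|x|-r\ra\le\la x\ra\approx |x|$ in this regime, so $|x|^{n-2}\la|x|-r\ra^{1-N}\lesssim |x|^{n-1}\la x\ra^{1-n}\la|x|-r\ra^{n-1-N}$ follows after absorbing powers of $|x|$, which is the claimed bound.

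The one subtlety, and the step I expect to require the most care, is making the change of variables from the surface measure on $\{|x-z|=r\}$ to the radial variable $\rho=|z|$ fully rigorous, including the behavior of the Jacobian near the two endpoints $\rho=\big||x|-r\big|$ (closest point) and $\rho\approx |x|+r$ or the "equator" (farthest point), where the map degenerates. Near those endpoints the Jacobian has an integrable singularity of the form $(\rho-\big||x|-r\big|)^{-1/2}$ type, which only improves the integrability in $\rho$ and does not affect the final bound; I would either cite the standard computation (as the paper does elsewhere, e.g.\ referencing \cite{GV}) or verify it directly by writing $z = x + r\omega$ with $\omega\in S^{n-1}$ and computing $|z|^2 = |x|^2 + 2r\,x\cdot\omega + r^2$. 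A clean alternative that avoids the Jacobian entirely is to split by dyadic distance: on the piece of the sphere where $|z|\approx 2^k\la|x|-r\ra$ for $2^k\lesssim |x|/\la|x|-r\ra$, the surface measure is at most $\min(r^{n-1}, (2^k\la|x|-r\ra)\cdot|x|^{n-2})$ up to constants, and summing $\la 2^k\la|x|-r\ra\ra^{-N}$ times that bound over $k$ again yields the lower-endpoint-dominated estimate. Either route reduces the lemma to the elementary one-dimensional integral above; the rest is bookkeeping to match the $\la\cdot\ra$ notation in the three cases.
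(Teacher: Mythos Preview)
Your outline is on the right track for the extreme cases $r<\tfrac12|x|$ and $r>2|x|$, and your idea of passing to the radial variable $\rho=|z|$ in the middle case is essentially what the paper does (via the angle $\theta$). However, there is a genuine gap in your middle-case estimate: the bound you assert for the Jacobian/slice factor is too crude, and as a result the inequality you write at the end is actually false.

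Concretely, with $z=x+r\omega$ and $\rho^2=|x|^2+r^2-2r|x|\cos\theta$, one has $\rho\,d\rho=r|x|\sin\theta\,d\theta$, so the surface integral becomes
\[
\int_{|x-z|=r}\la z\ra^{-N}\,dz \ \approx\ \frac{r^{n-2}}{|x|}\int_{||x|-r|}^{|x|+r}\rho\,(\sin\theta)^{n-3}\,\la\rho\ra^{-N}\,d\rho.
\]
In the regime $r\approx|x|$ and $||x|-r|\ll\rho\ll|x|$ one computes $\sin\theta\approx\rho/|x|$, so the combined density is $\approx\rho^{n-2}$, \emph{not} a constant $|x|^{n-2}$. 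The correct one-dimensional integral is therefore $\int\la\rho\ra^{n-2-N}\,d\rho\lesssim\la|x|-r\ra^{n-1-N}$ (this is exactly where the hypothesis $N>n-1$ is used). Your bound $|x|^{n-2}\la|x|-r\ra^{1-N}$ is weaker by a factor $(|x|/\la|x|-r\ra)^{n-2}$, and your proposed closing step $|x|^{n-2}\la|x|-r\ra^{1-N}\lesssim |x|^{n-1}\la x\ra^{1-n}\la|x|-r\ra^{n-1-N}$ fails whenever $||x|-r|\ll|x|$ (take $|x|$ large and $r=|x|$: the left side is $|x|^{n-2}$ while the right side is $\approx 1$). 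The same overcounting occurs in your dyadic alternative, where the cap with $|z|\lesssim\rho$ has area $\approx\rho^{n-1}$, not $\rho\,|x|^{n-2}$. The fix is exactly to keep the factor $\rho^{n-2}$ (equivalently, the paper's $\sin^{n-2}\theta\approx\theta^{n-2}$) in the integrand; once you do that, the rest of your argument goes through and matches the paper's proof.
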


\begin{proof}

The cases $r < \frac12 |x|$ and $r > 2|x|$ are both trivial, as $|z|$ is comparable to $|x|$ and $|r|$ respectively.
In addition, if $r \approx |x| \les 1$, then $\la z\ra^{-N} \les 1$ over a sphere of radius approximately $|x|$.
This leaves only the case where $r \approx |x|$ and both are relatively large.

We write the integral in spherical coordinates, with $\theta$ representing the angle between $z-x$ and $-x$.
Then $|z|^2 = |x|^2 + r^2 - 2|x|r\cos\theta$. We can estimate
\begin{equation*}
\int_{|x-z|=r} \la z\ra^{-N}dz = 
C_n \int_0^\pi \frac{r^{n-1}\sin^{n-2}\theta}{\la(|x|-r)^2 +2|x|r(1-\cos{\theta})\ra^{N/2}}d\theta.
\end{equation*}
One may replace $\sin\theta$ with $\theta$, and $(1-\cos\theta)$ with $\theta^2$ for the purposes
of establishing an upper bound.

On the ``cap" where $\theta < ||x|-r|/\sqrt{|x|r}$, the integral may be estimated by
\begin{equation*}
\int_0^\frac{||x|-r|}{\sqrt{|x|r}} \frac{r^{n-1}\theta^{n-2}}{\la|x|-r \ra^{N}}d\theta
\les  ||x|-r|^{n-1}\la|x|-r\ra^{-N},
\end{equation*}
keeping in mind that $r \approx |x|$.  On the remaining interval, we have
\begin{equation*}
\int_\frac{||x|-r|}{\sqrt{|x|r}}^\pi \frac{r^{n-1}\theta^{n-2}}{\la|x|\theta\ra^{N}}d\theta
\approx \int_{||x|-r|}^{\pi|x|} \frac{\alpha^{n-2}\,d\alpha}{\la \alpha\ra^N}
\les \la|x|-r\ra^{n-1-N},
\end{equation*}
using the substitution $\alpha = |x|\theta$.
\end{proof}

With this bound in hand, we consider first the $z$ integral
of Lemma~\ref{lem:ring nocanc}. 

\begin{lemma} \label{lem:A1}

	Let $\beta \geq 1$ and $0 \leq \alpha < n-1$.  If $N\geq n+\beta$, 
	then for each fixed constant $R\geq 0$, we have the bound
	$$
	\int_{\R^n}\frac{\la z\ra^{-N}}{|x-z|^\alpha\la |x-z| + R\ra \la |x-z|-R\ra^\beta} \, dz
	\les \frac{1}{\la x\ra^\alpha \la |x| + R\ra \la R-|x| \ra^\beta}.
	$$

\end{lemma}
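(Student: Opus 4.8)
The plan is to pass to polar coordinates centred at $x$, writing $r=|x-z|$, so that the left-hand side becomes
\begin{equation*}
\int_0^\infty \frac{1}{r^\alpha\la r+R\ra\la r-R\ra^\beta}\bigg(\int_{|x-z|=r}\la z\ra^{-N}\,d\sigma(z)\bigg)\,dr,
\end{equation*}
and then to substitute the three-part bound for the shell integral furnished by Lemma~\ref{lem:shells}. This partitions the $r$-integration into the ranges $r<\tfrac12|x|$, $\tfrac12|x|\le r\le 2|x|$, and $r>2|x|$, which we treat in turn. Throughout one may assume $\max(|x|,R)\gtrsim1$, since on the complementary bounded region $z\mapsto\la z\ra^{-N}|x-z|^{-\alpha}$ is integrable (because $\alpha<n-1<n$ and $N$ is large), all remaining factors are $\approx1$, and the right-hand side is $\approx1$ as well.

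In the two "off-diagonal" ranges $r<\tfrac12|x|$ and $r>2|x|$, where $r$ is bounded away from $|x|$, the key point is that one must retain the genuine decay of $\la r+R\ra^{-1}\la r-R\ra^{-\beta}$ rather than bounding it by $1$: this factor is $\approx\la R\ra^{-1-\beta}$ for $r<\tfrac12R$, $\approx\la R\ra^{-1}\la r-R\ra^{-\beta}$ for $r\approx R$, and $\approx\la r\ra^{-1-\beta}$ for $r>2R$, so one subdivides once more according to the size of $r$ relative to $R$. For $r<\tfrac12|x|$ one inserts the shell bound $r^{n-1}\la x\ra^{-N}$; a short computation gives a one-dimensional integral dominated by $\la x\ra^{-N}\,|x|^{(n-2-\alpha-\beta)_+}$ times a power of $\la R\ra$, which is controlled by $\la x\ra^{-\alpha}\la|x|+R\ra^{-1}\la|x|-R\ra^{-\beta}$ precisely because $N\ge n+\beta$ and $\alpha<n-1$ — the latter entering only in the subcase $R\les1\les|x|$, where the target decay is $\la x\ra^{-\alpha-1-\beta}$. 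The range $r>2|x|$ is handled identically with the shell bound $r^{n-1}\la r\ra^{-N}$; convergence at $r=\infty$ is automatic and the result again matches $\la x\ra^{-\alpha}\la\max(|x|,R)\ra^{-1-\beta}$.

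In the remaining range $\tfrac12|x|\le r\le2|x|$ the shell bound is $|x|^{n-1}\la x\ra^{1-n}\la|x|-r\ra^{n-1-N}$, so $r$ may be replaced by $\la x\ra$ up to constants. If $|x|$ and $R$ are not comparable (say $R<\tfrac14|x|$ or $R>4|x|$) then $\la r\pm R\ra\approx\la\max(|x|,R)\ra$ on the whole interval and the $r$-integral of $\la|x|-r\ra^{n-1-N}$ converges (since $N-n+1\ge\beta+1>0$), yielding $\la x\ra^{-\alpha}\la\max(|x|,R)\ra^{-1-\beta}$. The crucial case is $|x|\approx R$, where one must estimate
\begin{equation*}
\frac{1}{\la x\ra^{\alpha}\la x\ra}\int_{\frac12|x|}^{2|x|}\frac{\la|x|-r\ra^{n-1-N}}{\la r-R\ra^{\beta}}\,dr.
\end{equation*}
Since $N-n+1\ge\beta+1>\beta\ge1$, this is (after extending the domain to $\R$) a convolution of two integrable power weights in which the $\la|x|-r\ra$ factor decays strictly faster, whence the integral is $\les\la|x|-R\ra^{-\beta}$ by the elementary bound $\int_{\R}\la s-a\ra^{-p}\la s-b\ra^{-q}\,ds\les\la a-b\ra^{-\min(p,q)}$ valid for $p,q>1$. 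Combined with $\la x\ra\approx\la|x|+R\ra$ this reproduces the claimed right-hand side, and collecting the three ranges finishes the proof.

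I expect the main obstacle to be the bookkeeping in the off-diagonal ranges: a crude estimate on $\la r+R\ra^{-1}\la r-R\ra^{-\beta}$ loses a power and would force the stronger hypothesis $N\ge n+1+\beta$, so one must carry the $r$- and $R$-decay through the subdivisions, and it is exactly there that the assumption $\alpha<n-1$ is used. The diagonal case $|x|\approx R\approx r$ is conceptually the heart of the statement but reduces cleanly to the one-variable convolution estimate above.
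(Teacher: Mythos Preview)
Your proposal is correct and follows essentially the same route as the paper's proof: the shell decomposition via Lemma~\ref{lem:shells}, the three-range split in $r$, further case analysis on the relative sizes of $r$, $|x|$, and $R$, and the one-dimensional convolution estimate~\eqref{bracket decay} in the critical regime $r\approx|x|\approx R$. One small correction: you state the convolution bound as ``valid for $p,q>1$'' but then apply it with $q=\beta$, which may equal $1$; the correct hypothesis is $\max(p,q)>1$ (as in~\eqref{bracket decay}), which is satisfied here since $p=N-n+1\ge\beta+1>1$.
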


\begin{proof}

With $R\geq 0$ a 
fixed constant, we integrate $\R^n$ in shells of radius
$r=|x-z|$.
\begin{align*}
	\int_{\R^n}&\frac{\la z\ra^{-N}}{|x-z|^\alpha \la|x-z|+R\ra \la |x-z|-R\ra^\beta} \, dz	\\
	& \qquad = \int_0^\infty \frac{1}{r^\alpha \la r+R\ra \la r-R\ra^\beta}
	\int_{|z-x|=r} \la z\ra^{-N} \, dz \, dr.
\end{align*}
By the bounds in Lemma~\ref{lem:shells}, we bound the
above integral with a sum of three integrals,
\begin{align}
	\int_{\R^n} &\frac{\la z\ra^{-N}}{|x-z|^\alpha
	\la|x-z|+R\ra \la |x-z|-R\ra^\beta} \, dz\nn \\
	& \qquad \les\frac{1}{\la x\ra^{N}}\int_0^{|x|/2}\frac{r^{n-1-\alpha}}{\la r+R\ra \la r-R\ra^\beta}\, dr\label{eqn:ring1}\\
	& \qquad + \frac{|x|^{n-1-\alpha}}{\la x\ra^{n-1}} \int_{|x|/2}^{2|x|} 
	\frac{1}{\la r+R\ra \la r-R\ra^\beta \la r-|x|\ra^{N-n+1}} \, dr\label{eqn:ring2}\\
	& \qquad +\int_{2|x|}^\infty \frac{r^{n-1-\alpha}}{\la r+R\ra \la r-R
	\ra^\beta \la r \ra^{N}} \, dr.\label{eqn:ring3}
\end{align}
We estimate each piece individually. For \eqref{eqn:ring1},
we consider two cases.  First, if $R<\f34 |x|$, we cannot
use the decay of $\la r-R\ra^{-\beta}$ effectively, and instead note that
\begin{align*}
	\eqref{eqn:ring1}&\les \frac{1}{\la x\ra^{N}}\int_0^{|x|/2}\frac{r^{n-1-\alpha}}{\la r\ra}\, dr 
\les \frac{|x|^{n-1-\alpha}}{\la x\ra^{N}}\les \frac{1}
	{\la x\ra^{N-n+\alpha - \beta}\la |x|+R\ra \la R-|x|\ra^\beta}.
\end{align*}
The integral bound used here requires $\alpha < n-1$, 
and we used that $|x| \approx |x|+R$ and $|\,|x|-R|\leq |x|$ in the last step.
The desired bound follows, provided that $N \geq n+\beta$.

In the second case, when $R>\f34 |x|$,we have
$|R-r|>\f13 R$ and $r+R \approx R \approx |x|+R$, so that
\begin{align*}
	\eqref{eqn:ring1}&\les \frac{|x|^{n-\alpha}}{\la x\ra^{N} \la R \ra^{\beta+1}}
	\leq \frac{1}{\la x\ra^{N-n+\alpha} \la |x|+R\ra \la R -|x|\ra^\beta}.
\end{align*}
The last inequality is valid because $|R-|x|\,|\leq R$ in this case.

Now we consider the region on which $r\approx |x|$ in 
\eqref{eqn:ring2}.  On this region, we have
\begin{align*}
	\eqref{eqn:ring2}\les \frac{|x|^{n-1-\alpha}}
	{\la x\ra^{n-1}\la |x|+R\ra}
	\int_{r\approx |x|} \frac{1}{\la r-R \ra^{\beta} \la r-|x|
	\ra^{N-n+1}} \, dr.
\end{align*}
We now extend the integral to $\R$ and apply
the simple bound, which asserts that
\begin{align}\label{bracket decay}
    \int_{\R^m} \langle y \rangle^{-\gamma} \langle x-y\rangle^{-\mu} \, dy
    \lesssim \langle x \rangle^{-\min(\gamma, \mu)},					
\end{align}
for all choices $0 < \mu, \gamma$ with $\max(\gamma, \mu) > m$.
For the integral under consideration, one chooses
$m=1$, $\gamma=\beta$ and $\mu=N-n+1$ to obtain
\begin{align*}
	\eqref{eqn:ring2}\les \frac{1}
	{\la x\ra^\alpha \la |x|+R\ra \la R-|x| \ra^\beta},
\end{align*}
provided $N -n+1 > \beta$.

Finally, we note consider the final region of integration,
\eqref{eqn:ring3}.
We wish to control
\begin{align*}
	\int_{2|x|}^\infty \frac{r^{n-1-\alpha}}{\la r+R\ra \la r-R
	\ra^\beta \la r \ra^{N}} \, dr.
\end{align*}
We consider two cases.  First, if $R<\f32 |x|$, then since $r > 2|x|$ this
implies that $|R-|x|\,| \leq |r-R|$ and also $r+R \approx r$. 
Thus, in this case we have
\begin{align*}
	\eqref{eqn:ring3}\les \frac{1}{\la R-|x|\ra^\beta}
	\int_{2|x|}^\infty \la r\ra^{n-2-\alpha-N}\, dr
	&\les \frac{1}{\la R-|x|\ra^\beta \la x \ra^{N-n+1+\alpha}} \\
	&\les \frac{1}{\la x\ra^{N-n+\alpha}\la |x|+R\ra\la R-|x|\ra^\beta},
\end{align*}
provided $N>n-1-\alpha$ so that the integral converges.  We used $|x|+R \leq \frac52|x|$ in the last step.
The desired bound follows if $N\geq n$.

The second case is when $R>\f32 |x|$, in which case we have
$|R-|x|\,| \approx R \approx |x|+R$.  We break up the region of integration
further into three subregions.  First, if $2|x|\leq r\leq \f R2$, we note that $|R-r|\geq \f R2 \approx |R-|x|\,|$, 
permitting the bounds
\begin{align*}
	\int_{2|x|}^{\f R2}\frac{r^{n-1-\alpha}}{\la r+R\ra \la r-R\ra^\beta \la r \ra^N}\, dr
	& \les \frac{1}{\la R\ra \la R-|x|\ra^\beta} \int_{2|x|}^\infty \frac{r^{n-1-\alpha}}{\la r \ra^{N}}\, dr \\
	& \les \frac{1}{\la|x|+R\ra \la R-|x|\ra^\beta \la x\ra^{N-n+\alpha}}.
\end{align*}

The next region we consider is when $\f R2 \leq r\leq 2R$.
Since $r\approx R$, we gain nothing from $\la R-r\ra^{-\beta}$ and treat it as a constant.
Instead, we note that $|R-|x|\,| \approx R$, and thus we bound with
\begin{align*}
	\int_{\f R2}^{2R} \frac{r^{n-1-\alpha}}{\la r+R\ra \la R-r\ra^\beta
	\la r\ra^{N}}\, dr &\les \frac{R^{n-\alpha}}{\la R-|x|\ra^\beta \la R\ra^{N+1-\beta}} \\
	&\les \frac{1}{\la R-|x|\ra^\beta} \frac{1}{\la R
	\ra^{N-n +1+\alpha-\beta}}.
\end{align*}
Then, we note that $|x|\les R$, in order to bound with
$$
	\frac{1}{\la x\ra^{N-n+\alpha-\beta}\la |x|+R\ra \la R-|x|\ra^{\beta}}.
$$
This yields the desired bound, provided
$N \geq n +\beta$.

We now consider the last case in which $2R\leq r$.
We note that  $|R-r|\approx r \approx r+R$ to bound
\begin{align*}
	\int_{2R}^\infty \frac{r^{n-1-\alpha}}{\la r+R\ra \la R-r \ra^\beta
	\la r\ra ^N}\, dr\les 
	\int_{2R}^\infty \frac{1}{\la r \ra^{N-n+\alpha +\beta + 2}}\, dr
	\les \frac{1}{\la R \ra^{N-n + 1 +\alpha+\beta}}.
\end{align*}
We now note that $|R-|x|\, |\approx R$ and $R> \f32 |x|$
to bound with
\begin{align*}
	\frac{1}{\la x\ra^{N-n+\alpha}\la |x|+R\ra \la R-|x| \ra^\beta}.
\end{align*}
This yields the desired bound provided $N\geq n$
\end{proof}

We now conclude with the proof of Lemma~\ref{lem:ring nocanc}.

\begin{proof}[Proof of Lemma~\ref{lem:ring nocanc}]

Recall that we wish to bound the following,
\begin{align*}
	\iint_{\R^{2n}}\frac{|V\phi (z)| |V\phi (w)|}{|x-z|^{n-2}\la|x-z|+|w-y|\ra
	\la |x-z| - |y-w| \ra^{n-3}} \, dz\, dw.
\end{align*}

We consider the $z$ integral first.
Using Lemma~\ref{lem:efn decay} and the assumed decay $|V(z)|\les \la z\ra^{-(n-1)-}$,
it follows that $|V\phi(z)| \les \la z\ra^{-N}$ for some $N > 2n-3$. 
Fix the constant $R=|w-y|\geq 0$ and apply
Lemma~\ref{lem:A1} (with parameters $\alpha = n-2$ and $\beta = n-3$),
to conclude that $K(x,y)$ is bounded by the integral
\begin{align*}
	\frac{1}{\la x\ra^{n-2} }
	\int_{\R^n} \frac{\la w\ra^{-N}}{\la |x| + |w-y|\ra \la |w-y|-|x| \ra^{n-3}}\, dw.
\end{align*}
We again apply Lemma~\ref{lem:A1}, this time in $w$ with
$R=|x|\geq 0$ and $\alpha = 0$, to bound with
\begin{align*}
	\frac{1}{\la x\ra^{n-2} \la |x|+|y|\ra \la |y|-|x| \ra^{n-3}},
\end{align*}
as desired.
\end{proof}

\begin{lemma}\label{lem:B1}

	Suppose $|y|>10$  and $0<s\leq 1$.   Let $\alpha> 0$ and $\beta \geq 1$.
	If $N\geq n+\beta$, then for each fixed constant $R\geq 0$, we have the bound
	\begin{equation}  \label{eqn:B1}
	\int_{|w| < \frac{|y|}{2}}\frac{\la w\ra^{-N}}{|y-sw|^\alpha\la |y-sw|+R\ra \la |y-sw|-R\ra^\beta} \, dw
	\les \frac{1}{\la y\ra^\alpha \la |y|+R\ra \la R-|y| \ra^\beta}.
	\end{equation}

\end{lemma}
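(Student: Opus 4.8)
The plan is to follow the same template as Lemma~\ref{lem:shells} and Lemma~\ref{lem:A1}, with the one structural difference that the weight $\la w\ra^{-N}$ is now centered at the \emph{same} point as the origin of the integration variable, so it does not concentrate near the problematic region. First I would dispose of the domain of integration: on $\{|w|<\frac{|y|}{2}\}$ one has $\big||y|-|y-sw|\big|\le s|w|\le|w|<\tfrac12|y|$, hence $\tfrac12|y|<|y-sw|<\tfrac32|y|$, and with $|y|>10$ this gives $|y-sw|\approx|y|\approx\la y\ra$ and $\la|y-sw|+R\ra\approx\la|y|+R\ra$ uniformly in $w$ and $s$. Inserting these comparabilities into \eqref{eqn:B1}, it suffices to prove
\begin{equation*}
\int_{|w|<\frac{|y|}{2}}\frac{\la w\ra^{-N}}{\la|y-sw|-R\ra^\beta}\,dw\les\frac{1}{\la|y|-R\ra^\beta}.
\end{equation*}
If $\big||y|-R\big|\le 1$ the right side is bounded below, while the left side is at most $\int_{\R^n}\la w\ra^{-N}\,dw\les1$ since $N\ge n+\beta>n$; so I may assume $\big||y|-R\big|\ge1$, whence $\la|y|-R\ra\approx\big||y|-R\big|=:a$.

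Next I would pass to polar coordinates $w=\rho\omega$ and perform the angular integral exactly as in the proof of Lemma~\ref{lem:shells}. With $u=|y-s\rho\omega|$ and $|y-s\rho\omega|^2=|y|^2-2s\rho|y|\cos\theta+s^2\rho^2$, the substitution $2u\,du=2s\rho|y|\sin\theta\,d\theta$ gives $\sin^{n-2}\theta\,d\theta=\sin^{n-3}\theta\,\frac{u\,du}{s\rho|y|}$, and since $\sin^{n-3}\theta\le1$ (here $n>4$) and $|y-s\rho\omega|\le\tfrac32|y|$ I obtain
\begin{equation*}
\int_{S^{n-1}}\frac{d\sigma(\omega)}{\la|y-s\rho\omega|-R\ra^\beta}\les\frac{1}{s\rho}\int_{|y|-s\rho}^{|y|+s\rho}\frac{du}{\la u-R\ra^\beta}=:h(\rho).
\end{equation*}
Thus the matter is reduced to the one-dimensional estimate $\int_0^{|y|/2}\rho^{n-1}\la\rho\ra^{-N}h(\rho)\,d\rho\les a^{-\beta}$.

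Finally I would split this $\rho$-integral at $\rho_1:=\frac{a}{2s}$. For $\rho<\rho_1$ one has $2s\rho<a$, so $|u-R|\approx a$ on the whole interval $[|y|-s\rho,|y|+s\rho]$, giving $h(\rho)\les a^{-\beta}$; since $\int_0^\infty\rho^{n-1}\la\rho\ra^{-N}\,d\rho\les1$ (using $N>n$), this part contributes $\les a^{-\beta}$. For $\rho_1\le\rho<|y|/2$ I would use only the crude bound $h(\rho)\les1$ (the integrand of $h$ is $\le1$ over an interval of length $2s\rho$), and then, assuming $\rho_1\ge1$,
\begin{equation*}
\int_{\rho_1}^{|y|/2}\rho^{n-1}\la\rho\ra^{-N}\,d\rho\les\int_{\rho_1}^\infty\rho^{n-1-N}\,d\rho\les\rho_1^{\,n-N}\le\rho_1^{-\beta}=\Big(\tfrac{a}{2s}\Big)^{-\beta}\le(a/2)^{-\beta}\les a^{-\beta},
\end{equation*}
where $n-N\le-\beta$ is precisely the hypothesis $N\ge n+\beta$, and the last step uses $s\le1$. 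If instead $\rho_1<1$, then $a<2s\le2$, so $a\approx1$ and one simply bounds the second region by $\int_0^\infty\rho^{n-1}\la\rho\ra^{-N}\,d\rho\les1\approx a^{-\beta}$.

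The only real subtlety, and the point I expect to require care, is the resonant range $2s\rho>\big||y|-R\big|$: there the factor $\la|y-sw|-R\ra^{-\beta}$ contributes nothing, and a careless estimate yields either a spurious logarithm in $|y|$ or an unfavorable negative power of $s$. What rescues the argument is that in this range $\rho$ is automatically bounded below by $\rho_1\gtrsim a$ — it is the standing assumption $s\le1$ that forces $\rho_1=\frac{a}{2s}\gtrsim a$ rather than merely $\gtrsim a/s$ — so the polynomial tail $\int_{\rho_1}^\infty\rho^{n-1-N}\,d\rho$ supplies the missing $a^{-\beta}$ exactly when $N\ge n+\beta$. Everything else is the routine bracket bookkeeping already carried out in Lemmas~\ref{lem:shells}–\ref{lem:A1}.
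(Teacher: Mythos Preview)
Your argument is correct. The overall logic matches the paper's: the restriction $|w|<\tfrac{|y|}{2}$ forces $|y-sw|\approx|y|$, so only the factor $\la|y-sw|-R\ra^{-\beta}$ carries any content, and the integral is then split according to whether or not the interval of accessible values of $|y-sw|$ meets the resonant region near $R$.

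The implementation differs in one structural respect. The paper foliates by the level sets of $|y-sw|$ (equivalently, by spheres centered at $y/s$), invokes Lemma~\ref{lem:shells} for the spherical integral of $\la w\ra^{-N}$, and then makes the change of variables $q=sr$ to reduce to a one-dimensional convolution estimate of the form~\eqref{bracket decay}. You instead foliate by spheres centered at the origin, perform the angular integral directly via the law-of-cosines substitution $u=|y-s\rho\omega|$, and split the remaining radial integral at $\rho_1=\frac{a}{2s}$. Your route avoids appealing to Lemma~\ref{lem:shells} and is slightly more self-contained; the paper's route is more parallel to the proof of Lemma~\ref{lem:A1} and makes the role of the parameter $s$ visible through the rescaled bracket $\la (q-|y|)/s\ra^{N-n+1}$. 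Both arrive at the same dichotomy and both identify $N\ge n+\beta$ as exactly the threshold needed for the tail $\int_{\rho_1}^\infty$ (respectively the convolution bound) to recover the missing factor $\la |y|-R\ra^{-\beta}$.
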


\begin{rmk}
The extra assumptions of large $y$ and relatively small $w$ allow us to remove
the upper restriction on the size of $\alpha$.  This is very important because
we need $\alpha = n-1$ and $\alpha=n$ for the cases
of $P_eV1=0$ and $P_eVx = 0$ respectively.
\end{rmk}

\begin{proof}

We prove this in a similar manner to Lemma~\ref{lem:A1},
taking care to show that the new parameter $s$ is essentially harmless if $|y|$ is large.  We begin by
decomposing the integral into shells,
\begin{multline*}
	\int_{\R^n}\frac{\la w\ra^{-N}}{|y-sw|^\alpha\la |y-sw|+R\ra\la |y-sw|-R\ra^\beta} \, dw \\
	= \int_0^\infty \frac{1}
	{|sr|^\alpha \la sr+R\ra \la sr-R \ra^\beta} \int_{|y-sw|=sr}
	\la w\ra ^{-N}\, dw \, dr\\
	=\int_0^\infty \frac{1}
	{|sr|^\alpha \la sr + R\ra\la sr-R \ra^\beta} \int_{|w-\f ys|=r}
	\la w\ra ^{-N}\, dw \, dr.
\end{multline*}
As before, we can use Lemma~\ref{lem:shells} to break this
into three pieces.  The relevant bound is
\begin{align*}
	\bigg|\int_{|w-\f ys|=r}\la w\ra^{-N} dw \bigg|
	\les \left\{\begin{array}{ll}
	r^{n-1} \la y/s\ra^{-N} & r<\frac{|y|}{2s}\\
	\big(\frac{|y|}{s}\big)^{n-1}\la \frac{y}{s}\ra^{1-n} \la r-\frac{|y|}{s}\ra^{n-1-N} 
	& \frac{|y|}{2s}\leq r\leq \frac{2|y|}{s}\\
	r^{n-1}\la r\ra^{-N} & r>\frac{2|y|}{s}
	\end{array}
	\right..
\end{align*}

However we are not integrating over all of $\R^n$.  In fact,
the ball $|w| < \frac{|y|}{2}$ consists of points where
$sr = |y-sw|$ lies between $\frac12|y|$ and $\frac32|y|$
and is bounded away from zero. 
Thus only the shell where $r \approx \frac{|y|}{s}$ makes any
contribution to the integral.  Furthermore, $\frac{|y|}{s} \gtrsim 1$,
so the factors $(\frac{|y|}{s})^{n-1} \la\frac{y}{s}\ra^{1-n}$
neatly cancel each other.

We use that 
$sr\approx |y|$ to see that the contribution of the integral can be bounded by
\begin{align*}
	\int_{\frac{|y|}{2s}}^{2\frac{|y|}{s}}
	&\frac{1}
	{|sr|^\alpha \la sr+ R\ra\la sr-R \ra^\beta \la r - \frac{|y|}{s}\ra^{N-n+1}}\, dr  \\
	& \qquad \les \frac{1}{|y|^\alpha \la |y|+R\ra}
	\int_{\frac{|y|}{2s}}^{2\frac{|y|}{s}}
	\frac{1}
	{\la sr-R \ra^\beta \la r - \frac{|y|}{s}\ra^{N-n+1}}\, dr.
\end{align*}
We make the change of variables $q=rs$, to see
\begin{align*}
	\frac{1}{|y|^\alpha \la |y|+R\ra}
	\int_{\frac{|y|}{2}}^{2|y|}
	\frac{1}
	{\la q-R \ra^\beta \la \frac{q-|y|}{s} \ra^{N-n+1}}\, \frac{dq}{s}.	
\end{align*}
We note that if $||y|-R|\les 1$, then~\eqref{eqn:B1} is satisfied so long as the
integral above is bounded uniformly in $s$.
When $q\approx r$, we
replace $\la q - R\ra^{-\beta}$ by a constant and observe that
remaining expression is a portion of $\int_\R \la r - \frac{|y|}{s}\ra^{n-1-N}\,dr$,
 which is integrable and independent of $s$ provided $N > n$.

There are two cases to consider when $||y|-R||\gtrsim 1$.
The first case is when $|q-|y||<\f12 |R-|y||$.  Here, we
use $|q-R|=|q-|y|+|y|-R|\geq ||y|-R|-|q-|y||>\f12 ||y|-R|$.  Thus, we bound with
\begin{multline*}
	\frac{1}{|y|^\alpha\la |y|+R\ra \la |y|-R \ra^\beta}
	\int_{\frac{|y|}{2}}^{2|y|}
	\frac{1}
	{\la \frac{q-|y|}{s} \ra^{N-n+1}}\, \frac{dq}{s}	\\
	 \qquad \les
	\frac{1}{|y|^\alpha\la |y|+R\ra \la |y|-R \ra^\beta}
	\int_{\R}
	\frac{1}
	{\la r- \frac{|y|}{s} \ra^{N-n+1}}\, dr	
	 \les  \frac{1}{|y|^\alpha \la|y|+R\ra \la |y|-R \ra^\beta}
\end{multline*}
as desired.

In the second case when $|q-|y||>\f12 |R-|y||\gtrsim 1$,
so that $\frac{q-|y|}{s}\gtrsim 1$.  In this case,
we bound with
\begin{multline*}
	\frac{1}{|y|^\alpha\la|y|+R\ra } \int_{|q-|y||>\f12 |R-|y||}
	\frac{dq}{\la q-R \ra^\beta \big(\frac{q-|y|}{s}
	\big)^{N-n+1}s}\\
	\les \frac{s^{N-n}}{|y|^\alpha \la|y|+R\ra} \int_{|q-|y||>\f12 |R-|y||}
	\frac{dq}{\la q-R \ra^\beta \la q-|y|
	\ra^{N-n+1}}\\
	\les \frac{s^{N-n}}{|y|^\alpha\la|y|+R\ra} \int_{\R }
	\frac{dq}{\la q-R \ra^\beta \la q-|y|
	\ra^{N-n+1}} 
	\les \frac{s^{N-n}}{|y|^\alpha \la|y|+R\ra \la R-|y| \ra^\beta}.
\end{multline*}
Where the last inequality follows from \eqref{bracket decay} provided $N>n-1+\beta$.

\end{proof}

\section{Completing the cases $n=6, 8,10$}\label{sec:8,10}

In dimensions $n=8,10$, we still have the task of controlling
the contribution of $W_{\log}$, which vanishes only if $P_eV1 = 0$.
From the expansions for
$(1+R_0^+(\lambda^2)V)^{-1}$ in Theorem~2.3 of \cite{YajNew},
this term takes the form
\begin{align}
	W_{\log}=\frac{1}{\pi i} \int_0^\infty R_0^+(\lambda^2)
	[V(P_eV) \otimes V(P_eV)](R_0^+(\lambda^2)-R_0^-(\lambda^2))
	\tilde \Phi(\lambda)\lambda^{j-1}(\log \lambda)^\ell\, d\lambda
\end{align}
To control the contribution of these terms, we can
adjust the techniques used previously to control the
operator $W_{s,2}$.  We use the following modification
of Lemma~\ref{lem:IBP}.

\begin{lemma} \label{lem:IBP2}
Suppose there exists $\beta > -1$ and $M > \beta+1$
such that $|F^{(k)}(\lambda)| \les \lambda^{\beta - k}|\log \lambda|^\ell$ for all $0 \leq k \leq M$.
Then given a smooth cutoff function $\tilde\Phi$,
\begin{equation} \label{eqn:IBPv2}
\Big|\int_0^\infty e^{i\rho \lambda}F(\lambda) \tilde\Phi(\lambda)\,d\lambda \Big| 
\les \la \rho\ra^{-\beta-1} \la \log \la\rho\ra \ra^\ell.
\end{equation}
If $F$ is further assumed to be smooth and supported in the annulus $L \les \lambda \les 1$
for some $L > \rho^{-1}>0$, then
\begin{equation} \label{eqn:IBP2v2}
\Big|\int_0^\infty e^{i\rho \lambda}F(\lambda) \tilde\Phi(\lambda)\,d\lambda \Big| \les \la \rho\ra^{-M}L^{\beta+1-M} \la \log L \ra^{\ell}.
\end{equation}
\end{lemma}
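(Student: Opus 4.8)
The plan is to repeat the proof of Lemma~\ref{lem:IBP} essentially line by line, carrying the extra factor $|\log\lambda|^\ell$ through each estimate and checking that it only ever produces the advertised $\la\log\la\rho\ra\ra^\ell$ (or $\la\log L\ra^\ell$) correction. The single genuinely new ingredient is a pair of elementary weighted bounds: for $0 < a \les 1$ and $\gamma > -1$,
\[
\int_0^a \lambda^\gamma |\log\lambda|^\ell\,d\lambda \les a^{\gamma+1}\la\log a\ra^\ell,
\]
while for $\gamma < -1$,
\[
\int_a^1 \lambda^\gamma |\log\lambda|^\ell\,d\lambda \les a^{\gamma+1}\la\log a\ra^\ell.
\]
Both follow from the substitution $\lambda = as$ together with the crude inequality $|\log a + \log s|^\ell \les \la\log a\ra^\ell + |\log s|^\ell$ and the convergence of $\int_0^1 s^\gamma|\log s|^\ell\,ds$ (respectively $\int_1^\infty s^\gamma|\log s|^\ell\,ds$), which absorbs the $|\log s|^\ell$ term and leaves exactly the $\la\log a\ra^\ell$ factor.

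For~\eqref{eqn:IBPv2}, uniform boundedness is immediate because $\lambda^\beta|\log\lambda|^\ell$ is integrable near $0$. For $\rho \gtrsim 1$ I would split the integral at $\lambda = \rho^{-1}$. The piece over $[0,\rho^{-1}]$ is bounded by $\int_0^{\rho^{-1}}\lambda^\beta|\log\lambda|^\ell\,d\lambda$, which the first weighted bound controls by $\rho^{-\beta-1}\la\log\rho\ra^\ell$. On $[\rho^{-1},\infty)$ I would integrate by parts $M$ times exactly as in Lemma~\ref{lem:IBP}; by the Leibniz rule, the hypothesis $|F^{(k)}(\lambda)|\les \lambda^{\beta-k}|\log\lambda|^\ell$, and $|\tilde\Phi^{(k)}(\lambda)|\les\lambda^{-k}$, each boundary term at $\lambda=\rho^{-1}$ is $\les \rho^{-k}(\rho^{-1})^{\beta-k+1}|\log\rho|^\ell = \rho^{-\beta-1}|\log\rho|^\ell$, and the remaining integral is $\les \rho^{-M}\int_{\rho^{-1}}^\infty \lambda^{\beta-M}|\log\lambda|^\ell\,d\lambda$. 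Since $M > \beta+1$ forces $\beta - M < -1$, the second weighted bound gives $\les \rho^{-\beta-1}\la\log\rho\ra^\ell$. Summing the $O(M)$ terms and using $\rho \approx \la\rho\ra$ for $\rho \gtrsim 1$ yields~\eqref{eqn:IBPv2}.

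For~\eqref{eqn:IBP2v2}, since $L > \rho^{-1}$ the interval $[0,\rho^{-1}]$ lies entirely outside the support of $F$, so there is nothing to estimate there, and because $F$ is smooth and compactly supported in the annulus $L\les\lambda\les 1$ no boundary terms are produced when integrating by parts $M$ times. What remains is $\les \rho^{-M}\int_L^1 \lambda^{\beta-M}|\log\lambda|^\ell\,d\lambda$, and the second weighted bound with $a = L$ (again valid since $\beta - M < -1$) gives $\les \la\rho\ra^{-M}L^{\beta+1-M}\la\log L\ra^\ell$, as claimed.

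I do not anticipate a real obstacle here; the combinatorial structure of the argument is already laid out in Lemma~\ref{lem:IBP}, and the content is bookkeeping. The one point that needs care is the direction of the logarithmic estimate: near $\lambda = 0$ the weight $|\log\lambda|$ is \emph{larger} than $|\log a|$, so $\int_0^a\lambda^\gamma|\log\lambda|^\ell\,d\lambda \les a^{\gamma+1}\la\log a\ra^\ell$ cannot be obtained by freezing the weight; it must come from the rescaling $\lambda = as$, where the convergent integral $\int_0^1 s^\gamma|\log s|^\ell\,ds$ swallows the $|\log s|^\ell$ contribution. The analogous remark applies to the tail integral $\int_a^1\lambda^\gamma|\log\lambda|^\ell\,d\lambda$ for $\gamma<-1$.
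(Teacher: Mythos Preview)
Your proposal is correct and follows essentially the same approach as the paper: both redo the proof of Lemma~\ref{lem:IBP} while tracking the extra logarithmic factor. The one simplification you miss is that on $[\rho^{-1},\infty)\cap\operatorname{supp}\tilde\Phi$ one has $|\log\lambda|\les|\log\rho|$ directly (since $\rho^{-1}\le\lambda\les 1$), so the paper simply freezes the logarithm there and reduces to Lemma~\ref{lem:IBP} rather than invoking your second weighted bound; your rescaling argument for $\int_0^{\rho^{-1}}\lambda^\beta|\log\lambda|^\ell\,d\lambda$ is in fact cleaner than the paper's, which tacitly assumes $\beta\ge 0$ when bounding $\lambda^\beta\les\rho^{-\beta}$.
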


\begin{proof}
The proof follows as in the proof of Lemma~\ref{lem:IBP},
with a few simple modifications.  When $0<\lambda<\rho^{-1}$, it follows that
$$
	\bigg|\int_0^{\rho^{-1}}\lambda^\beta (\log \lambda)^\ell \, d\lambda \bigg| \les \rho^{-\beta} 
	\bigg| \int_0^{\rho^{-1}}(\log \lambda)^\ell\, d\lambda 
	\bigg| \les \rho^{-\beta-1} \la \log \rho \ra^\ell .
$$
On the other hand, if $\lambda >\rho^{-1}$, we note that
on the support of $\tilde \Phi$, that 
$|\log \lambda|\les |\log \rho|$.  The second claim follows
similarly with $L$ replacing $\rho^{-1}$.

Finally, for $\rho < 1$, integrability of $F$ ensures that the left side of~\eqref{eqn:IBPv2}
is bounded by a constant independent of $\rho$.  Inequality~\eqref{eqn:IBP2v2} is
vacuously true for $\rho \ll 1$ because then $ L > \rho^{-1} \gg 1$ does not allow
$F(\lambda)$ to be non-zero in the support of the integral under the assumptions of the Lemma.

\end{proof}

Similar to the proof of Lemma~\ref{lem:lambdaInt}, we can prove

\begin{lemma}

Let $R_0^\pm(\lambda^2,A)$ denote the convolution kernel of $R_0^\pm(\lambda^2)$
evaluated at a point with $|x| = A$.  For each $j \geq 0$,
\begin{multline}
	\int_0^\infty R_0^+(\lambda^2, A)\big(R_0^+ - R_0^-\big)(\lambda^2,B)
	\lambda^{j-1}(\log \lambda)^\ell
	\tilde\Phi(\lambda)\, d\lambda\\
	\les \left\{\begin{array}{ll} \frac{\la \log \la A \ra \ra^\ell}{A^{n-2} \la A\ra^{n-2+j}} & \text{ if } A > 2B \\
	\frac{\la \log \la B\ra \ra^\ell }{A^{n-2} \la B\ra^{n-2+j}} & \text{ if } B > 2A \\
	\frac{\la \log \la A - B \ra  \ra^\ell}{A^{n-2} \la A\ra \la A - B\ra^{n-3+j}}
	& \text{ if } A \approx B
	\end{array}
	\right.
\end{multline}
This can be written more succinctly as
\begin{equation}
\int_0^\infty R_0^+(\lambda^2, A)(R_0^+ - R_0^-\big)(\lambda^2,B)
	\lambda^{j-1}(\log \lambda)^\ell \tilde\Phi(\lambda)\, d\lambda
	\les \frac{\la \log \la A-B\ra\ra^\ell}{A^{n-2}\la A+B\ra \la A-B\ra^{n-3+j}}.
\end{equation}

\end{lemma}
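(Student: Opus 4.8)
The plan is to repeat the proof of Lemma~\ref{lem:lambdaInt} essentially verbatim, making only two changes: wherever Lemma~\ref{lem:IBP} was invoked I would instead invoke the logarithmically corrected Lemma~\ref{lem:IBP2}, and I would observe at the outset that the weight $\lambda^{j-1}$ here plays exactly the role that $\lambda^{-1}$ together with the radial derivative $\partial_B^j$ played there. Indeed, multiplying the expansion~\eqref{eqn:asymptotics1} of $(R_0^+ - R_0^-)(\lambda^2,B)$ by $\lambda^j$ produces $\lambda^{n-2+j}\Omega(\lambda B) + B^{2-n}e^{\pm i\lambda B}\lambda^j\Psi_{\frac{n-3}{2}}(\lambda B)$, which has precisely the structure of~\eqref{eqn:asymptotics} (and is in fact slightly cleaner, since no negative powers of $(\lambda B)$ arise). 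Inserting the product expansions for $R_0^+(\lambda^2,A)$ and $\lambda^j(R_0^+ - R_0^-)(\lambda^2,B)$ and splitting according to whether $A>2B$, $B>2A$, or $A\approx B$ yields exactly the same four, five, and six pieces as in~\eqref{eqn:intAlarge},~\eqref{eqn:intBlarge}, and~\eqref{eqn:intAB}, each now carrying an extra factor $(\log\lambda)^\ell$.

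Each piece is then handled as before. The low-energy pieces carrying the product $\Omega(\lambda A)\Omega(\lambda B)$ are estimated by the pointwise bound $\lambda^{n-3+j}|\log\lambda|^\ell$ and direct integration; since $n-3+j\geq 2>-1$ the integral over the support $\lambda\lesssim\la A\ra^{-1}$ is comparable to its contribution near the upper endpoint, producing the bound $A^{2-n}\la A\ra^{-(n-2+j)}\la\log\la A\ra\ra^\ell$ (respectively with $\la B\ra$), which is dominated by the claimed bound since $\la A\ra^{n-2+j}\geq\la A\ra\la A-B\ra^{n-3+j}$. Every remaining piece has an oscillatory phase $e^{i\lambda A}$, $e^{\pm i\lambda B}$, or $e^{i\lambda(A\pm B)}$, and after factoring out the appropriate powers of $A$ and $B$ the amplitude satisfies the hypotheses of Lemma~\ref{lem:IBP2} with the same exponent $\beta$ and (where relevant) the same cutoff scale $L\in\{A^{-1},B^{-1}\}$ as in the proof of Lemma~\ref{lem:lambdaInt}; applying~\eqref{eqn:IBPv2} or~\eqref{eqn:IBP2v2} reproduces the earlier bound times an extra factor $\la\log\la\rho\ra\ra^\ell$ or $\la\log L\ra^\ell$ with $\rho\in\{A,B,A\pm B\}$.

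The only real work is to check that in each case these extra logarithmic factors collapse into the advertised $\la\log\la A\ra\ra^\ell$, $\la\log\la B\ra\ra^\ell$, or $\la\log\la A-B\ra\ra^\ell$ (equivalently, into the single factor $\la\log\la A-B\ra\ra^\ell$ of the succinct form). When $A>2B$ one has $\la A\ra\approx\la A-B\ra\approx\la A+B\ra$ and $B<A$, so every $\la\log\la\rho\ra\ra^\ell$ and every $\la\log L\ra^\ell=\la\log\la B\ra\ra^\ell$ is $\les\la\log\la A-B\ra\ra^\ell$; the case $B>2A$ is symmetric with the roles of $A$ and $B$ interchanged. The delicate regime is $A\approx B$, where $\la A-B\ra$ may be far smaller than $\la A\ra$, and I expect this to be the main obstacle. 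The one ``significant'' piece there, with phase $e^{i\lambda(A-B)}$, already produces the correct factor $\la\log\la A-B\ra\ra^\ell$ directly from~\eqref{eqn:IBPv2}, and the piece with phase $e^{i\lambda(A+B)}$ is harmless because $\la A+B\ra\approx\la A\ra$ in this regime. The remaining ``crude'' pieces (the $\Omega\Omega$ term and the three terms whose $\Psi\cdot\Omega$ product is supported at scale $\lambda\approx A^{-1}$) instead pick up a factor $\la\log\la A\ra\ra^\ell$; but each of these already comes with the strong power saving $A^{4-2n-j}$, so one only needs that
$A^{3-n-j}\la A-B\ra^{n-3+j}\la\log\la A\ra\ra^\ell\les\la\log\la A-B\ra\ra^\ell$ for $1\leq\la A-B\ra\les\la A\ra$. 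This follows by splitting at $\la A-B\ra=\la A\ra^{1/2}$: when $\la A-B\ra\geq\la A\ra^{1/2}$ the two logarithms are comparable while $A^{3-n-j}\la A-B\ra^{n-3+j}\les 1$, and when $\la A-B\ra<\la A\ra^{1/2}$ the power satisfies $A^{3-n-j}\la A-B\ra^{n-3+j}\les\la A\ra^{-(n-3+j)/2}$, which decays fast enough (as $n-3+j\geq 2$) to absorb a polylogarithmic loss. Finally, in the degenerate case $A\not\gtrsim 1$ (so in particular $|A-B|\lesssim 1$ when $A\approx B$) Lemma~\ref{lem:IBP2} just gives uniform boundedness, and the estimates hold with no logarithmic factor exactly as in Lemma~\ref{lem:lambdaInt}.
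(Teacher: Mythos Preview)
Your proof is correct and follows the same approach as the paper: reproduce the proof of Lemma~\ref{lem:lambdaInt} using Lemma~\ref{lem:IBP2} in place of Lemma~\ref{lem:IBP}, then verify that the resulting logarithmic factors collapse to the stated form. The paper handles the only delicate collapse (absorbing $\la\log\la A\ra\ra^\ell$ for the crude pieces when $A\approx B$) slightly differently, observing that $G(t)=\la\log t\ra^\ell/t^{n-3+j}$ is decreasing on $t\geq 1$ whenever $\ell\leq n-3+j$ so that $G(\la A\ra)\les G(\la A-B\ra)$, rather than your split at $\la A-B\ra=\la A\ra^{1/2}$; both arguments are valid.
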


The proof follows by simply following the proof of
Lemma~\ref{lem:lambdaInt}
using Lemma~\ref{lem:IBP2} in place of Lemma~\ref{lem:IBP}.
In the regime where $A \approx B$ most of the terms in the
decomposition~\eqref{eqn:intAB} can be bounded
by $A^{2-n}\la A\ra^{2-n-j} \la \log \la A\ra\ra^\ell$, except for
the fifth term which is bounded instead by $A^{1-n}\la A-B\ra^{3-n-j}
\la \log \la A-B\ra\ra^\ell$.  Since $A$ and $B$ are positive numbers,
$1\leq \la A-B\ra \leq \la A\ra$.  Moreover, so long as $\ell \leq n-3+j$ 
the function 
$G(t) = \frac{ \la \log t\ra^\ell}{t^{n-3+j}}$ is decreasing for all
$t > 1$.  The end result is that the upper bound for the fifth term
is always an effective upper bound for the entire sum.

When $n = 8,10$, the form of $W_{\log}$ expressed in~\cite{YajNew}
consists of a single term with the values $j = n-4$, $\ell = 1$,
which certainly satisfies $ \ell \leq n-3+j$.

We can then repeat the  arguments in Lemma~\ref{lem:ring nocanc}
with the lazy bound $\la \log\la A-B\ra \ra \les \la A-B\ra^{0+}$.
This bounds the kernel of $W_{\log}$ by the quantity
\begin{multline*} 
	\iint_{\R^{2n}}\frac{|V\tilde\phi(z)| |V\tilde\phi(w)| \la \log \la |x-z|- |y-w|\ra\ra^{\ell}}{|x-z|^{n-2} 
	\la |x-z| + |y-w|\ra
	\la |x-z| - |y-w| \ra^{n-3+j}}\,dz dw\\
	\les 
	\iint_{\R^{2n}}\frac{|V\tilde\phi(z)| |V\tilde\phi(w)|  }{|x-z|^{n-2} 
	\la |x-z| + |y-w|\ra
	\la |x-z| - |y-w| \ra^{n-3+j-}}\,dz dw	
	\\ \les \frac{1}
	{\la x\ra^{n-2} \la |x| + |y| \ra  \la |x| - |y|\ra^{n-3+j-}}.
\end{multline*}
where $\tilde \phi$ satisfies the same decay properties
as an eigenfunction $\phi$.  Since $j = n-4 > 2$, this is
sufficient to make $W_{\log}$ an admissible kernel
that is bounded on $L^p(\R^n)$ for all $1 \leq p \leq \infty$.

If $n =6$ the structure of $W_{\log}$ is more complicated.
Assuming that $|V(x)| \leq C\la x\ra^{-\beta}$ for $\beta > 10$,
Theorem 2.3 of~\cite{YajNew} provides the expression
\begin{align}
	W_{\log}= \sum_{j,\ell =1}^2 \sum_{a,b=1}^{2d} \int_0^\infty R_0^+(\lambda^2)
	[\varphi_a \otimes \psi_b ](R_0^+(\lambda^2)-R_0^-(\lambda^2))
	\tilde \Phi(\lambda)\lambda^{2j-1}(\log \lambda)^\ell\, d\lambda,
\end{align}
where $d$ is the (finite) dimension of the zero energy eigenspace, and each
$\varphi_a, \psi_b$ belongs to $\la x\ra^{-\beta+3+}H^2(\R^6)$.
Dependence of $\varphi_a$ and $\psi_b$ on the parameters $j$ and $\ell$
is suppressed in the notation above.  Each term in the sum yields an integral
kernel that is bounded by
\begin{multline*} 
	\iint_{\R^{2n}}\frac{|\varphi_a(z)| |\psi_b(w)| \la \log \la |x-z|- |y-w|\ra\ra^{\ell}}{|x-z|^{4} 
	\la |x-z| + |y-w|\ra
	\la |x-z| - |y-w| \ra^{3+2j}}\,dz\, dw\\
	\les 
	\iint_{\R^{2n}}\frac{|\varphi_a(z)| |\psi_b(w)|  }{|x-z|^{4} 
	\la |x-z| + |y-w|\ra
	\la |x-z| - |y-w| \ra^{3+2j-}}\,dz\, dw.
\end{multline*}

It is not possible to invoke Lemma~\ref{lem:A1} directly because we lack a pointwise
bound for functions $\varphi_a$, $\psi_b$.  However they do belong to the space
$\la x\ra^{-\beta+3+}L^6(\R^6)$ by Sobolev embedding.  Then by H\"older's inequality
we may write
\begin{align*}
\int_{\R^6}&\frac{|\varphi_a(z)|}{|x-z|^4\la |x-z| + R\ra \la |x-z|-R\ra^{3+2j-}} \, dz \\
& \quad \leq 
\| \la z\ra^{\beta-3-} \varphi_a \|_6 \ \lnorm \frac{1}{\la z\ra^{1/6} \la |x-z| + R\ra^{1/6}}\rnorm_\infty \\
& \hspace{.5in} \times
\bigg(\int_{\R^6} \frac{\la z\ra^{-\frac65(\beta-\frac16-3-)}}
{(|x-z|^4 \la |x-z|-R\ra^{3+2j-})^{6/5}\la |x-z|+R\ra} \, dz \bigg)^{5/6} \\
& \quad\les \| \la z\ra^{\beta-3-} \varphi_a \|_6 \ \la |x| + R\ra ^{-\frac16}
\bigg(\frac{1}{\la x\ra^4 \la |x| + R\ra^{5/6} \la|x| - R\ra^{3+2j -}} \bigg) \\
& \quad \les \frac{1}{\la x\ra^4 \la |x|+R\ra \la |x|-R\ra^{3+2j-}}.
\end{align*}
The $L^\infty$ bound is observed via the inequality $\la z\ra \la |x-z| +R\ra
\gtrsim \la|z| + |x-z| + R\ra \geq \la |x|+ R\ra$.
In order to apply Lemma~\ref{lem:A1} to the last integral, we need the singularity
$|x-z|^{-24/5}$ to have an exponent less than $6-1$,
and for $\frac65(\beta - \frac{19}{6} -) \geq 6 + \frac65(3+2j-)$.
The first condition is true, and the second is satisfied provided $\beta \geq 11+ \frac16 + 2j$.
Allowing $\beta > 16$ suffices in all cases.

The integral involving $\psi_b(w)$ is handled in an identical manner.
After summing over $a$, $b$, and $\ell$, it follows that the kernel for $W_{\log}$ is bounded by
\begin{equation*}
\sum_{j=1}^2 \frac{1}{\la x\ra^4 \la |x|+|y|\ra \la |x|-|y|\ra^{3+2j-}}
\end{equation*}
If $j=2$ this is an admissible kernel whose operator is bounded on $L^p(\R^6)$
for all $1\leq p \leq \infty$.  If $j=1$, the bound of $\frac{1}{\la x\ra^4\la y\ra^{-6-}}$
for large $y$ just fails to be integrable. One can follow the proof of Proposition~\ref{prop:Kjkreg}
to determine that this integral operator is still bounded on $L^p(\R^6)$ for all
$1 \leq p < \infty$, missing only the $p=\infty$ endpoint.

We believe that careful analysis of the operators $D_{jk}^{(i)}$ 
derived in~\cite{FY} would show that $W_{\log}$ vanishes when $P_eV1 = 0$ in dimension six, just as
it does when $n = 8, 10$.  Even without this claim, however, the bound for
$W_{\log}$ given here suffices to complete the proof of Theorem~\ref{thm:main}
in all of its cases.

\end{document}